\pgfplotsset{compat=newest}
\pgfplotsset{plot coordinates/math parser=false}
\newlength\figureheight
\newlength\figurewidth 
\newcommand{\R}{\mathbb{R}}
\def\abs#1{\left|#1\right|} 
\def\norm#1{\left\|#1\right\|}
\newcommand{\Eb}{\mathbf{E}}
\newcommand{\Mb}{\mathbf{M}}
\newcommand{\Wb}{\mathbf{W}}
\newcommand{\Ab}{\mathbf{A}}
\newcommand{\Pb}{\mathbf{P}}
\newcommand{\Ib}{\mathbf{I}}
\newcommand{\Lb}{\mathbf{L}}
\newcommand{\Db}{\mathbf{D}}
\newcommand{\Kb}{\mathbf{K}}
\newcommand{\cc}{{c}_0}
\newcommand{\V}{\mathcal V}
\newcommand{\E}{\mathcal E}
\newcommand{\D}{\mathcal D}
\newcommand{\K}{\mathcal K}
\newcommand{\dx}{\mathrm d x}
\newcommand{\ddx}{\frac{\mathrm d}{\dx}}
\newcommand{\by}{\boldsymbol{y}}
\newcommand{\bp}{\boldsymbol{p}}
\newcommand{\bv}{\boldsymbol{v}}
\renewcommand{\bf}{\boldsymbol{f}}
\newcommand{\bu}{\boldsymbol{u}}
\def\minres{{{\sc Minres}}}
\def\gmres{{{\sc Gmres}}}
\numberwithin{theorem}{section}
\newcommand{\TheTitle}{Optimization of a partial differential equation on a complex network} 
\newcommand{\ShortTitle}{Optimal control of PDEs on networks} 
\newcommand{\TheAuthors}{Martin~Stoll and Max Winkler}
\headers{\ShortTitle}{\TheAuthors}
\title{\TheTitle}
\author{Martin Stoll\thanks{Technische Universit\"at Chemnitz, Department of Mathematics, Chair of Scientific Computing, 09107 Chemnitz, Germany, \email{martin.stoll@mathematik.tu-chemnitz.de}}\and
  Max Winkler\thanks{ Technische Universit\"at Chemnitz, Department of Mathematics, Chair of Numerical Methods for PDEs, 09107 Chemnitz, Germany \email{max.winkler@mathematik.tu-chemnitz.de}}
}
\begin{document}

\maketitle

\begin{abstract}
Differential equations on metric graphs can describe many phenomena in the physical world but also the spread of information on social media. To efficiently compute the solution is a hard task in numerical analysis. Solving a design problem, where the optimal setup for a desired state is given, is even more challenging. In this work, we focus on the task of solving an optimization problem subject to a differential equation on a metric graph with the control defined on a small set of Dirichlet nodes. We discuss the discretization by finite elements and provide rigorous error bounds as well as an efficient preconditioning strategy to deal with the large-scale case. We show in various examples that the method performs very robustly.
\end{abstract}

\begin{keywords}
Complex Networks, Optimal Control, Preconditioning, Saddle Point Systems, Error Estimation
\end{keywords}

\begin{AMS}
65F08, 65N30, 49J20, 35R02
\end{AMS}
\section{Introduction}
Graphs and networks are ubiquitous in the modeling of physical phenomena, the representation of data, and other applications \cite{berkolaiko2013introduction}. While the graph Laplacian is a crucial tool in the analysis of such networks \cite{Chu97,romano2017little,spielman2007spectral,VLu07}, there are many examples where it is often not sufficient to only reflect the binary relationships of being connected or not.

In order to address further challenges one can extend the concept of a graph to a so-called metric graph. This is a graph where an edge length is assigned to each edge and we also define a differential operator, often called the \textit{Hamiltonian}, on the graph domain. Metric graphs with PDEs defined on them have become an important modeling tool with the development of numerical schemes currently lacking behind the groundbreaking analytical and modeling work done in (applied) mathematics and other disciplines. A recent paper by Benzi and Arioli \cite{ariolifinite} seems to be one of the first general works devoted to numerical schemes on metric graphs with the discussion from discretization to efficient solution. There are some recent works dealing with PDEs on metric graphs, typically tailored to a particular application, see for example the following results related to gas networks \cite{grundel2016numerical,herty2010new,hild2012real,nordenfelt2007spectral,wybo2015sparse,zlotnik2015optimal}.

In this paper, we build upon the work in \cite{ariolifinite} and consider an optimal control problem on a metric graph with the differential equation as the constraint. Such problems have received an enormous amount of attention over the last years when defined on bounded domains in $\R^d$ (cf. \cite{book::hpuu09,book::IK08,book::FT2010} and the references mentioned therein). For optimal control problems on networks the amount of material is very limited and we refer to \cite{leugering2017nonoverlapping} for a challenging gas network application. In the present article we do not want to focus on a particular application but rather discuss the spatial discretization, its errors, the resulting linear systems and the choice of a suitable preconditioned iterative solver.

In such optimal control problems the control can appear as a distributed or boundary control. We here focus on the more challenging case when the control is placed on network nodes with a Dirichlet condition. For related results on discretization strategies for elliptic optimal Dirichlet control problems in bounded domains we refer to \cite{ApelMateosPfeffererRoesch2018,CasasRaymond2006,MayRannacherVexler2013,OfPhanSteinbach2015,PfeffererWinkler2018preprint,Winkler2018preprint}. It is also the purpose of the present paper to extend the ideas developed in these articles to the situation that the state equation is formulated on a metric graph. To this end, we study the necessary optimality conditions based on an adjoint approach and corresponding finite discretizations.

As a first main result, discretization error estimates for the the numerical approximation of
the Dirichlet control as well as for the corresponding states are derived.

Additionally, we introduce a preconditioning strategy that is build upon a Schur-complement approach, a technique that has been applied very successfully for optimal control problems on bounded domains both in the elliptic and parabolic cases \cite{HS10,MW11,PW10,PSW11,DRW08,S11_TDTP}.

The paper is structured as follows. First, we formulate the model state
equation and summarize some preliminary results in
Section~\ref{sec:basics}. Moreover, we introduce a related optimal control
problem and derive the optimality system. Our discretization strategy is
studied in Section~\ref{sec:discretization}. There, we derive rigorous
discretization error estimates for the finite element approximation of the
state equation and the approximation of a discrete Kirchhoff operator which
appears in the discrete optimality system of the studied Dirichlet control problem.
These results are then used for error estimates of the approximate solutions of the optimal control problem.
An efficient solver for the optimality system and corresponding preconditioners are investigated in Section~\ref{sec:preconditioning}. Finally, in Section~\ref{sec:experiments}, we test the theoretically predicted behavior in several numerical experiments. To be more precise, we confirm that the discretization error estimates are sharp and that the preconditioned method is robust.

\section{Optimal Dirichlet control problems on metric graphs}
\label{sec:basics}
\subsection{Differential equations on metric graphs}
We consider an undirected graph $G=(\V,\E)$ consisting of a vertex set
$\V=\left\lbrace v_i\right\rbrace_{i=1}^{n}$ and the edge set $\E$
\cite{Chu97}. Each edge $e\in \E$ connects a pair of nodes $(v^e_a,v^e_b)$ with
$v^e_a,v^e_b\in \V$.  Furthermore, we define a weight function
$w:\V\times \V\rightarrow\mathbb{R}$ satisfying
$w(v,v')=w(v',v)\text{ for all } v,v'\in \V$ for an undirected graph. We further 
assume that this function is positive if there is an edge $e\in\E$ with endpoints $v$ and $v'$ and zero otherwise. The degree of the vertex $v\in \V$ is defined as
$
d(v)=\sum_{v'\in \V}w(v,v').
$
The diagonal degree matrix $\Db\in\R^{n\times n}$ is defined as $\Db_{v,v}=d(v)$.
When using a node $v\in \V$ as an index for a matrix or vector, we actually mean the
corresponding index within $\V$. This notation is more elegant for our purposes.
We can then obtain the graph Laplacian as $\Lb=\Db-\Wb$ with the entries of the weight
matrix $\Wb_{v,v'}$ given by $w(v,v')$ for $v,v'\in \V$.
The Laplacian in this form is rarely used as typically its normalized form
\cite{VLu07} is employed for segmentation purposes.
The normalized Laplacian is defined by
$
	\Lb_s=\Db^{-1/2}\Lb\Db^{-1/2}=\Ib-\Db^{-1/2}\Wb\Db^{-1/2},
$
which is a symmetric matrix that is often used in machine learning and imaging applications \cite{VLu07}. We will rely on the incidence matrix $\Eb\in\R^{n\times m}$ where $m=|\E|$ is the number
of edges and $n=|\V|$ the number of vertices. The graph Laplacian can then
be represented by
\begin{equation*}
	\Lb=\Eb\Eb^{\top}\in\R^{n\times n}.
\end{equation*}
The graph Laplacian is in certain applications not sufficient to describe the intricate relationships as it only reflects information about the nodes being connected and the edge information are stored in a weight. A more sophisticated representation follows from the now introduced concept of metric graphs.

We want to extend the concept of the combinatorial graph and identify each
edge with an interval of length 
$L_e:=\abs{v_a^e-v_b^e}$ on the real line. Such a graph is 
called a \emph{metric graph}. A metric graph can be equipped with a
differential operator, such as e.\,g.\ the one-dimensional Schr\"odinger operator
\begin{equation}
\label{eq:strong_pde}
(\mathcal{H}y)(x):=\left(-\frac{\mathrm d^2}{\dx^2}+\cc(x)\right)y(x)
\end{equation}
with a potential function $\cc$, and on each edge $e\in \E$ one can formulate
a differential equation of the form
\begin{equation*}
	(\mathcal{H}y|_e)(x_e) = f|_e(x_e)\ \text{for all}\ x_e\in (0,L_e),
\end{equation*}
where the functions $f|_e\colon (0,L_e)\to\R$, $e\in \E$, are given source terms.
Here, $x_e$ are local coordinates associated to the edge $e$.
With a slight abuse of notation we will sometimes evaluate $y|_e$ in
one of the vertices $v\in \V$ of $e$. Depending on the orientation of the local coordinate $x_e$
we then mean either $y|_e(0)$ or $y|_e(L_e)$.

Additionally, we can impose boundary or vertex conditions to couple these equations.
There are of course several different vertex conditions and we will distinguish among two different types. First, in vertices $v\in \V_\K\subset \V$, we have \emph{homogeneous
Neumann--Kirchhoff conditions}, i.\,e., there holds
\begin{equation}
\label{eq:Kirchhoff_Neumann_condition}
(\K y)(v):=\sum_{e\in \E_v}\frac{\mathrm d}{\dx}y|_e(v)=0
\end{equation}
with $\E_v$ the edge set incident to the vertex $v$.
In vertices $v\in \V_\D:=\V\setminus \V_\K$ the solution $y$ fulfills \emph{Dirichlet conditions}
\begin{equation}
\label{eq:Dirichlet_conditions}
y|_e(v)=u_v,\ \text{for all}\ e\in \E_v,
\end{equation}
where $u\in \R^{n_\D}$, $n_\D := |\V_\D|$, is a vector containing the Dirichlet data.
Note that if all nodes are Dirichlet nodes then the problem trivially
decouples into a set of one-dimensional problems. Other boundary conditions
are not discussed here but will be subject of future research. For more information on the vertex conditions on metric graphs we refer to \cite{berkolaiko2013introduction,grady2010discrete,shuman2012emerging}.

The Kirchhoff-Neumann conditions are the natural boundary conditions for the
differential operator \eqref{eq:strong_pde}, as for each $v\in \V$ and test functions
$\phi\in \otimes_{e\in\E} C^\infty(e)$ that are continuous in $v$ and vanish in $v'\in \V\setminus\{v\}$, the formula
\begin{align}
\label{eq:partial_integration}
(\K y)(v)\,\phi(v) &= \sum_{e\in \E_v} y|_e'(v)\,\phi|_e(v)\nonumber\\
&= \sum_{e\in \E_v} \int_e \left[y'(x) \,\phi'(x) + y''(x)\,\phi(x)\right]\dx
\end{align}
is fulfilled.
Here and in the following, we use the notation
$\ddx y|_e(x) = y|_e'(x)$ and omit the subscript $e$ unless the context otherwise requires.
The state equation considered throughout this article finally reads
\begin{equation}
\label{eq:strong_form}
	\left\lbrace
	\begin{aligned}
	-y'' + \cc\,y &= f &&\mbox{on all}\ e\in \E,\\
	(\K y)(v)&=0&&\mbox{for}\ v\in \V_\K,\\
	y(v)&=u_v &&\mbox{for}\ v\in \V_\D,
	\end{aligned}
	\right.
\end{equation}
where $f\in \otimes_{e\in \E} L^2(e)$ and $u\in \R^{n_\D}$ are given data.

For an appropriate treatment of this boundary value
problem we require some further notation.
We introduce the function spaces
\[
L^2(\Gamma)=\bigotimes_{e\in \E} L^2(e)\quad \mbox{and}\quad
H^1(\Gamma)=\bigotimes_{e\in \E} H^1(e)\cap C^0(\Gamma)
\]
equipped with the norms
\begin{align*}
\norm{y}^2_{L^2(\Gamma)}&:=\sum_{e\in E}\norm{y}^2_{L^2(e)}, \\	
\norm{y}^2_{H^1(\Gamma)}&:=\norm{y}_{L^2(\Gamma)}^2 + \sum_{e\in \E}\norm{y'}^2_{L^2(e)}.
\end{align*}
The $L^2(\Gamma)$-inner product is denoted by $(f,g)_{L^2(\Gamma)} := \sum_{e\in \E} \int_e f(x)\,g(x)\,\dx$.
To establish the essential boundary conditions we moreover define
\begin{align*}
H^1_D(\Gamma)&:=\{y\in H^1(\Gamma)\colon y(v) = 0\quad \forall v\in
\V_\D\}.
\end{align*}

With the integration-by-parts formula \eqref{eq:partial_integration} one can
derive the weak formulation of \eqref{eq:strong_pde}--\eqref{eq:Dirichlet_conditions}: \emph{Find $y\in H^1(\Gamma)$ with
$y(v) = u_v$, $v\in \V_\D$, such that}
\begin{equation}\label{eq:weak_form}
\quad a(y,w)= (f,w)_{L^2(\Gamma)}\quad \forall w\in H^1_D(\Gamma),
\end{equation}
where the bilinear form $a:H^1(\Gamma)\times H^1(\Gamma)\to \R$ is defined by
\begin{align*}
a(y,w):=\sum_{e\in \E} \int_e \left(y'(x)\,w'(x) + \cc(x)\,y(x)\,w(x)\right) \dx.
\end{align*}
Throughout this article, the potential function $\cc$ belongs to
$L^\infty(\Gamma):=\otimes_{e\in \E} L^\infty(e)$ and fulfills $\cc\ge 0$
a.\,e.\ in $\Gamma$. Moreover, the source term $f$ belongs to $L^2(\Gamma)$.
With the Lax-Milgram-Lemma we then directly conclude that \eqref{eq:weak_form} possesses a unique solution.

\subsection{The optimal control problem}
As an extension of the above we now consider an optimal control problem of the form
\begin{equation}
\label{obj}
\text{Minimize}\quad\frac{1}{2}\norm{y-\bar{y}}_{L^2(\Gamma)}^2+\frac{\beta}{2}\abs{u}_2^2\quad
\text{over}\quad y\in H^1(\Gamma), u\in \R^{n_\D}
\end{equation}
subject to the constraint
\begin{equation}
\label{constraint}
	\left\lbrace
	\begin{aligned}
		-y'' + \cc\,y&=f &&\text{on all}\ e\in \E,\\
		(\K y)(v)&=0&&v\in \V_\K,\\
		y(v)&=u_v &&v\in \V_\D.
	\end{aligned}
	\right.
\end{equation}
Here, $\abs{\cdot}_2$ is the Euclidean norm in $\R^{n_\D}$.
The state equation is understood in the weak sense \eqref{eq:weak_form}.  We
can decompose the state into a part depending linearly on $u$ and a constant
contribution, i.\,e., $y=y_u+y_f$, with $y_u\in H^1(\Gamma)$, $y_u(v) = u_v$ for $v\in \V_\D$,
and $y_f\in H_D^1(\Gamma)$ satisfying 
\begin{equation*}
a(y_u,w) = 0
\quad\text{and}\quad
a(y_f,w) = (f,w)_{L^2(\Gamma)}
\end{equation*}
for all $w\in H^1_D(\Gamma)$.
This decomposition allows to introduce a linear control--to--state operator
$S\colon \R^{n_\D}\to L^2(\Gamma)$ defined by $u\mapsto S(u) := y_u$, 
and one can eliminate the state equation in the optimal control problem
\eqref{obj}--\eqref{constraint}. Then, we arrive at the reduced optimization problem
\begin{equation}
\label{eq:reduced_objective}
\text{Minimize}\quad j(u) := \frac12 \norm{Su+y_f-\bar
y}_{L^2(\Gamma)}^2 + \frac\alpha2 \abs{u}_2^2
\quad \text{s.t.}\quad u\in \R^{n_\D}.
\end{equation}
As $S$ is linear, the functional $j$ is quadratic and hence Fr\'echet
differentiable and convex. As a consequence, by differentiation using the
chain rule, we can derive the optimality condition which is the variational
problem
\begin{equation}\label{eq:opt_cond_bad}
\left(Su+y_f-\bar y, Sw\right)_{L^2(\Gamma)} + \beta\,u^\top w = 0\qquad \forall
w\in \R^{n_\D}.
\end{equation}
Due to the convexity of $j$ this condition is also
sufficient for $u$ being the unique global solution of
\eqref{eq:reduced_objective}.  In order to derive a more handy form of the
optimality condition we investigate the adjoint operator of $S$.
\begin{lemma}
The adjoint operator $S^*: L^2(\Gamma)\to\R^{n_D}$ possesses the
representation $S^* = - \K\circ P$, where $\K$ is defined as in
\eqref{eq:Kirchhoff_Neumann_condition} and $P:L^2(\Gamma)\to H_D^1(\Gamma)$ is
defined by $y\mapsto P(y):= p$ with $p\in H_D^1(\Gamma)$ fulfilling
\begin{align*}
a(v,p) = (y, v)_{L^2(\Gamma)}\qquad \forall v\in H_D^1(\Gamma).
\end{align*}
\end{lemma}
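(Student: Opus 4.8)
The plan is to verify the defining relation of the adjoint, namely $(Su,z)_{L^2(\Gamma)} = u^\top (S^* z)$ for all $u\in\R^{n_\D}$ and $z\in L^2(\Gamma)$, and to show that the right-hand side equals $u^\top\bigl(-\K P(z)\bigr)$. First I would fix $z\in L^2(\Gamma)$, set $p:=P(z)\in H^1_D(\Gamma)$, and record the strong interpretation of the defining identity $a(v,p)=(z,v)_{L^2(\Gamma)}$, $v\in H^1_D(\Gamma)$. Testing with functions supported in the interior of a single edge gives $-p''+\cc\,p = z$ on every $e\in\E$, whence $p''=\cc\,p-z\in L^2(e)$ and therefore $p|_e\in H^2(e)$, so the one-sided derivatives $\ddx p|_e(v)$ exist at the endpoints. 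Testing next with functions that are continuous and nonzero only at a single vertex $v\in\V_\K$ yields the natural (Kirchhoff) boundary condition $(\K p)(v)=0$ for all $v\in\V_\K$; this is exactly the reverse reading of the integration-by-parts formula \eqref{eq:partial_integration}.

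Second I would compute $(Su,z)_{L^2(\Gamma)}=(y_u,z)_{L^2(\Gamma)}$ by inserting $z=-p''+\cc\,p$ and integrating by parts edge by edge. Using the per-edge identity $\int_e\bigl(y_u'\,p'+y_u\,p''\bigr)\dx=\sum_{v}y_u|_e(v)\,\ddx p|_e(v)$ from \eqref{eq:partial_integration} (with its outward-derivative sign convention), I obtain
\begin{equation*}
(y_u,z)_{L^2(\Gamma)} = a(y_u,p) - \sum_{v\in\V}\sum_{e\in\E_v}y_u|_e(v)\,\ddx p|_e(v).
\end{equation*}
The first term vanishes: since $p\in H^1_D(\Gamma)$ and $y_u=S(u)$ satisfies $a(y_u,w)=0$ for all $w\in H^1_D(\Gamma)$, the choice $w=p$ gives $a(y_u,p)=0$.

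Third, for the vertex sum I would exploit that $y_u\in H^1(\Gamma)\subset C^0(\Gamma)$ is continuous at every vertex, so $y_u|_e(v)=y_u(v)$ is independent of $e\in\E_v$. Splitting $\V=\V_\K\cup\V_\D$, each term at $v\in\V_\K$ equals $y_u(v)\sum_{e\in\E_v}\ddx p|_e(v)=y_u(v)\,(\K p)(v)=0$ by the Kirchhoff condition established in the first step, while at $v\in\V_\D$ the term equals $u_v\,(\K p)(v)$ because $y_u(v)=u_v$. Hence $(y_u,z)_{L^2(\Gamma)}=-\sum_{v\in\V_\D}u_v\,(\K p)(v)=u^\top\bigl(-\K p\bigr)$, which identifies $S^*z=-\K P(z)$; as $z$ was arbitrary, $S^*=-\K\circ P$.

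The main obstacle is the boundary bookkeeping. Because $y_u\notin H^1_D(\Gamma)$ — it carries the Dirichlet data and does not vanish at $\V_\D$ — one cannot simply substitute $y_u$ as a test function into the adjoint variational problem; the nonzero traces at the Dirichlet vertices are precisely what must survive and reassemble into $\K p$. Care is therefore needed to (i) justify the edgewise integration by parts through the $H^2(e)$-regularity of $p$, and (ii) keep the orientation and sign convention of $\ddx p|_e(v)$ consistent with \eqref{eq:partial_integration}, so that the surviving Dirichlet-node contributions appear with the correct sign in $-\K p$. Everything else is a routine computation.
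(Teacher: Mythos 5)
Your proposal is correct and follows essentially the same route as the paper's proof: obtain the strong form $-p''+\cc\,p=z$ from elliptic regularity, integrate by parts edge by edge, use $a(Su,p)=0$ because $p\in H^1_D(\Gamma)$ while $Su$ is $a$-harmonic, and identify the surviving Dirichlet-vertex boundary terms with $-\K p$. Your explicit verification that $(\K p)(v)=0$ at the Kirchhoff vertices (so that those boundary terms drop out) is a detail the paper leaves implicit in its appeal to the integration-by-parts formula, but the argument is the same.
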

\begin{proof}
Standard regularity results for elliptic problems imply that $P$ even maps onto $H^2(\Gamma)\cap H_D^1(\Gamma)$. Consequently, $p$ solves $-p'' + \cc\,p=y$ almost everywhere in $\Gamma$.
Together with the integration-by-parts formula \eqref{eq:partial_integration}
we obtain for all $z \in \R^{n_\D}$
\begin{equation*}
	(y, Sz)_{L^2(\Gamma)}
	= (-p'' + \cc\,p, Sz)_{L^2(\Gamma)}
	= a(Sz,p) - (\K p)^\top z.
\end{equation*}
Due to $p\in H_D^1(\Gamma)$ there holds $a(Sz,p) = 0$. This implies $S^* y = -\K p$
and proves the assertion.
\end{proof}
As a consequence, we can rewrite the optimality condition
\eqref{eq:opt_cond_bad} in the form
\begin{equation*}
(Su+y_f - \bar y, Sw)_{L^2(\Gamma)} = (S^*(Su+y_f-\bar y), w)_{\R^{n_\D}} =
-(\K p, w)_{\R^{n_\D}},
\end{equation*}
where the so-called \emph{adjoint state} $p\in H^1_D(\Gamma)$ is the weak
solution of
\begin{equation}
\label{eq:adjoint_equation}
\left\lbrace
\begin{aligned}
\left(-\frac{\mathrm d^2}{\dx^2}+\cc\right) p &= y-\bar y
&&\mbox{on all}\ e\in \E,\\
(\K p)(v) &= 0 && v\in \V_\K,\\
p(v) &= 0 && v\in \V_\D,
\end{aligned}
\right.
\end{equation}
with $y=Su+y_f$.
This allows for a reformulation of the optimality condition
\eqref{eq:opt_cond_bad} by means of
\begin{equation}
\label{eq:opt_cond}
\beta\,u-\K p = 0.
\end{equation}
To summarize the previous investigations we state the following theorem:
\begin{theorem}
The pair $(u,y)\in \R^{n_\D}\times H^1(\Gamma)$ is the unique global solution
of \eqref{obj}--\eqref{constraint} if and only if some adjoint state
$p\in H_D^1(\Gamma)$ exists, such that the system
\begin{equation}
\label{eq:optimality_system}
\left\lbrace
\begin{aligned}
y(v)=u_v\ v\in \V_\D,\qquad a(y,w) &= (f,w)_{L^2(\Gamma)} &&\forall w\in
H_D^1(\Gamma), \\
a(w,p) &= (y-\bar y,w)_{L^2(\Gamma)} && \forall w\in
H_D^1(\Gamma), \\
\beta\,u - \K p &= 0 &&
\end{aligned}
\right.
\end{equation}
is fulfilled.
\end{theorem}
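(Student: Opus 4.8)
The plan is to exploit that, after eliminating the state via the control-to-state operator $S$, the reduced functional $j$ from \eqref{eq:reduced_objective} is a strictly convex, Fr\'echet differentiable functional on the finite-dimensional space $\R^{n_\D}$. For such functionals the unique global minimizer is characterized by a single stationarity equation, and it then only remains to translate this equation into the three lines of \eqref{eq:optimality_system} with the help of the adjoint representation established in the preceding Lemma.

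First I would record the equivalence between the constrained problem \eqref{obj}--\eqref{constraint} and the reduced problem \eqref{eq:reduced_objective}. By the Lax--Milgram argument used for \eqref{eq:weak_form}, every $u\in\R^{n_\D}$ determines a unique weak state $y = Su + y_f\in H^1(\Gamma)$ with $y(v)=u_v$ on $\V_\D$. Hence the map $u\mapsto (u,\,Su+y_f)$ is a bijection onto the feasible set of \eqref{obj}--\eqref{constraint}, and along this set the objective of \eqref{obj} agrees with $j(u)$. Consequently $(u,y)$ solves \eqref{obj}--\eqref{constraint} if and only if $y=Su+y_f$ and $u$ minimizes $j$.

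Next I would invoke convexity. Since $S$ is linear, the data-misfit term is convex and, because $\beta>0$, the regularization term $\tfrac{\beta}{2}\abs{u}_2^2$ renders $j$ strictly convex and coercive on $\R^{n_\D}$; continuity together with coercivity yields existence of a minimizer, and strict convexity yields its uniqueness. As $j$ is moreover Fr\'echet differentiable, this unique minimizer is characterized by the stationarity condition $j'(u)=0$, which is precisely the variational equality \eqref{eq:opt_cond_bad}. This already settles both the stated equivalence and the uniqueness.

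The remaining work is pure assembly. Defining the adjoint state $p:=P(Su+y_f-\bar y)\in H_D^1(\Gamma)$, the defining relation of $P$ is exactly the weak adjoint equation \eqref{eq:adjoint_equation}, i.\,e.\ the second line of \eqref{eq:optimality_system} (using the symmetry of $a$), while $y=Su+y_f$ reproduces the first line. By the Lemma we have $S^*(Su+y_f-\bar y)=-\K p$, so substituting into \eqref{eq:opt_cond_bad} turns it into the gradient equation $\beta\,u-\K p=0$, the third line; note that the Lemma's regularity guarantees $p\in H^2(\Gamma)\cap H_D^1(\Gamma)$, so $\K p$ is well defined. Reading the same computation backwards shows that any triple satisfying \eqref{eq:optimality_system} forces $j'(u)=0$ and hence $u$ to be the unique minimizer. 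I do not expect a genuine obstacle here; the only point demanding care is the appeal to strict convexity and to $\beta>0$ in guaranteeing uniqueness, since without it the characterization would identify a possibly non-unique global minimizer.
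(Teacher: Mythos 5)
Your proposal is correct and follows essentially the same route as the paper: the theorem there is stated as a summary of the preceding derivation, which likewise eliminates the state via $S$, characterizes the unique minimizer of the reduced quadratic functional $j$ through the stationarity condition \eqref{eq:opt_cond_bad}, and rewrites that condition as $\beta u-\K p=0$ using the adjoint representation $S^*=-\K\circ P$. Your explicit appeal to strict convexity and coercivity for existence and uniqueness only makes precise what the paper leaves implicit.
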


\section{Discretization and error analysis}
\label{sec:discretization}
In \cite{ariolifinite} the authors study a finite element discretization of
the variational problem \eqref{eq:weak_form} and their approach will also form the basis of
our investigations. For the convenience of the reader we briefly repeat this discretization approach.
On each edge $e\in\E$ we introduce an equidistant grid with vertices
$\{v_a^e=v_0^e,v_1^e,\ldots,v_{n_e}^e=v_b^e\}$. This induces a decomposition of the one-dimensional edge $e$ into intervals $I^e_k:=(v_k^e,v_{k+1}^e)$, $k=0,\ldots,n_e-1$.
The global finite element space is defined by
\begin{equation*}
	V_h := \{y_h\in C(\Gamma)\colon y_h|_{I_k^e} \in \mathcal P_1,\
	k=0,\ldots,n_e-1, e\in \E\}.
\end{equation*}
Here, the discretization parameter $h>0$ is the maximal length of the
intervals $I_k^e$, $k=0,\ldots,n_e-1$, $e\in \E$.
By $\psi_j^e,\ j=1,\ldots,n_e-1$, and $\phi_v,\ v\in \V$,
we denote the nodal basis functions of $V_h$ fulfilling
$\psi_j^e(v_j^e)=1$ for $j=1,\ldots,n_e-1$ and $\phi_v(v) = 1$ for $v\in \V$.
Hence, each function $y_h\in V_h$ can be represented by
\begin{equation}
\label{eq:ansatz_discretization}
	y_h(x)=\sum_{e\in \E}\sum_{j=1}^{n_e-1}y_j^e\,\psi_{j}^{e}(x)
	+ \sum_{v\in \V} y_v\,\phi_v(x).
\end{equation}
The Galerkin approximation of \eqref{eq:weak_form} then reads:
\emph{Find $y_h\in V_h$ with $y_h(v) = u_v$ for $v\in \V_\D$ such that}
\begin{equation}
\label{Galerkin_form}
	a(y_h,w_h) = (f,w_h)_{L^2(\Gamma)} \qquad
	\forall w_h\in V_{h,D},
\end{equation}
with $V_{h,D}:=V_h\cap H_D^1(\Gamma)$.
Analogous to the continuous case, we split the discrete state $y_h$ into
$y_{u,h}:= S_h u$, where 
\begin{equation*}
	S_h:\R^{n_\D}\to V_{h},
\end{equation*}
denotes the \emph{discrete harmonic extension} (harmonic w.\,r.\,t.\ the Hamiltonian $\mathcal{H}$) of the Dirichlet data $u\in \R^{n_\D}$,
and a function $y_{f,h}\in V_{h,D}$ fulfilling homogeneous Dirichlet conditions in
the nodes $\V_\D$. To be more precise, we have
\begin{align}
	y_{u,h}(v) = u_v,\ v\in \V_\D,\quad a(y_{u,h},w_h) &= 0  &&\forall w_h\in V_{h,D},
	\label{eq:def_harmonic_extension}\\
	a(y_{f,h},w_h) &= (f,w_h)_{L^2(\Gamma)}  &&\forall w_h\in V_{h,D},
	\label{eq:def_yhf}
\end{align}
and a simple computation shows $y_h = y_{h,u} + y_{h,f}$.
Finally, we can formulate the discretized optimal control problem
\begin{equation}\label{eq:discrete_objective}
\text{Minimize}\ J_h(y_h,u_h) := \frac12 \norm{y_h - \bar{y}}_{L^2(\Gamma)}^2
+ \frac\alpha2\abs{u_h}_2^2\ \mbox{over}\ u_h\in \R^{n_\D}, y_h\in V_h,
\end{equation}
subject to
\begin{equation}
\label{eq:discrete_state_equation}
	y_h(v) = u_v,\ v\in \V_\D,\quad
	a(y_h,w_h) = (f,v_h)_{L^2(\Gamma)}\qquad \forall v_h\in V_{h,D}.
\end{equation}
As in the continuous case discussed in Section~\ref{sec:basics} we can derive a
necessary and sufficient optimality condition.
First, we define the solution operator $P_h:L^2(\Gamma)\to V_{h,D}$ of the
discretized adjoint equation by $P_h(y) = p_h$ with
\begin{equation}
\label{eq:def_Ph}
	a(w_h,p_h) = (y,w_h)_{L^2(\Gamma)}\quad\forall w_h\in V_{h,D}.
\end{equation}
The discretized Kirchhoff-Neumann operator
$\K_h\colon V_h\to \R^{n_\D}$ is defined in a variational sense by
\begin{align}\label{eq:discrete_Kirchhoff}
	(\K_h p_h)(v) &= a(\phi_v,p_h) - (y, \phi_v)_{L^2(\Gamma)}
	\qquad
	\forall v\in \V_\D,
\end{align}
with $y\in L^2(\Gamma)$ chosen such that $p_h = P_h(y)$.
\begin{lemma}
\label{lem:adjoint_of_S}
	The adjoint of the operator $S_h:\R^{n_D}\to V_h$ is
	\[S_h^*=-\K_h\circ P_h\colon L^2(\Gamma)\to \R^{n_D}.\]
\end{lemma}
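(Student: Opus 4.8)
The plan is to verify directly the defining identity of the adjoint with respect to the $L^2(\Gamma)$ inner product on $V_h$ and the Euclidean inner product on $\R^{n_\D}$; that is, regarding $S_h$ as a map into $L^2(\Gamma)$ via the inclusion $V_h\hookrightarrow L^2(\Gamma)$, I would show that
\[
(y, S_h z)_{L^2(\Gamma)} = -(\K_h P_h y, z)_{\R^{n_\D}}
\qquad\text{for all } y\in L^2(\Gamma),\ z\in\R^{n_\D}.
\]
First I would fix such $y$ and $z$ and abbreviate $p_h := P_h y\in V_{h,D}$, so that by the definition \eqref{eq:def_Ph} of $P_h$ we have $a(w_h,p_h)=(y,w_h)_{L^2(\Gamma)}$ for all $w_h\in V_{h,D}$. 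The whole computation then mirrors the continuous proof of the preceding lemma, with the role of integration by parts now played by the Galerkin identities.

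The key structural step is a splitting of the discrete harmonic extension. Since $S_h z = y_{z,h}$ satisfies $y_{z,h}(v)=z_v$ for $v\in\V_\D$, and $\phi_v(v')=\delta_{v,v'}$, the function $r_h := S_h z - \sum_{v\in\V_\D} z_v\,\phi_v$ vanishes at every Dirichlet node and hence lies in $V_{h,D}$. Using this decomposition I would expand
\[
(y,S_h z)_{L^2(\Gamma)} = \sum_{v\in\V_\D} z_v\,(y,\phi_v)_{L^2(\Gamma)} + (y,r_h)_{L^2(\Gamma)},
\]
and then apply the defining relation of $P_h$ to the admissible test function $r_h\in V_{h,D}$ to replace $(y,r_h)_{L^2(\Gamma)}$ by $a(r_h,p_h)$.

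It remains to unwind $a(r_h,p_h)$. By linearity and the symmetry of $a$ one has $a(r_h,p_h)=a(S_h z,p_h)-\sum_{v\in\V_\D} z_v\,a(\phi_v,p_h)$, and the first term vanishes because $p_h\in V_{h,D}$ is an admissible test function in the discrete harmonicity condition \eqref{eq:def_harmonic_extension}, i.e.\ $a(S_h z,p_h)=0$. Collecting the surviving terms yields $(y,S_h z)_{L^2(\Gamma)} = -\sum_{v\in\V_\D} z_v\big[a(\phi_v,p_h)-(y,\phi_v)_{L^2(\Gamma)}\big]$, and the bracket is exactly the component $(\K_h p_h)(v)$ from the definition \eqref{eq:discrete_Kirchhoff}, with $y$ serving as the preimage $p_h=P_h(y)$. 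This gives $(y,S_h z)_{L^2(\Gamma)} = -(\K_h P_h y,z)_{\R^{n_\D}}$ and hence $S_h^*=-\K_h\circ P_h$.

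The main obstacle to watch is that $S_h z\notin V_{h,D}$ in general, so the defining identity of $P_h$ cannot be applied to $S_h z$ itself; the splitting into the boundary nodal part $\sum_{v\in\V_\D} z_v\phi_v$ and the interior remainder $r_h\in V_{h,D}$ is precisely what isolates the admissible test function and produces the Kirchhoff boundary terms. A secondary point worth noting is that $\K_h$ in \eqref{eq:discrete_Kirchhoff} is specified only once a preimage $y$ of $p_h$ under $P_h$ is fixed; in the composition $\K_h\circ P_h$ this preimage is the argument $y$ itself, so no ambiguity arises and the identification of the bracket with $(\K_h p_h)(v)$ is legitimate.
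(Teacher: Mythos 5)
Your argument is correct and is essentially the paper's own proof: the same decomposition $S_h z=\sum_{v\in\V_\D}z_v\phi_v+r_h$ with $r_h\in V_{h,D}$, the same application of the defining relations of $P_h$, $S_h$, and $\K_h$, and the same observation that the preimage $y$ in the definition of $\K_h$ is supplied by the argument of $P_h$. The only cosmetic remark is that the appeal to symmetry of $a(\cdot,\cdot)$ is unnecessary, since both Galerkin identities \eqref{eq:def_Ph} and \eqref{eq:def_harmonic_extension} are already stated with the test function in the slot you use.
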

\begin{proof}
Let $y\in L^2(\Gamma)$, $z\in \R^{n_D}$ and $p_h = P_h(y)$ be arbitrary.
We confirm using the definitions of $P_h$, $\K_h$ and $S_h$ as well as the properties
$w_h:=S_h(z) - \sum_{v\in \V_\D} z_v\phi_v\in V_{h,D}$ and $p_h\in V_{h,D}$
\begin{align*}
	(y,S_h(z))_{L^2(\Gamma)} &= (y,S_h(z) - \sum_{v\in \V_\D} z_v\,\phi_v)_{L^2(\Gamma)}
	+ (y,\sum_{v\in \V_\D} z_v\,\phi_v)_{L^2(\Gamma)} \\
	&= a(S_h(z) - \sum_{v\in \V_\D} z_v\,\phi_v, p_h) + (y,\sum_{v\in \V_\D}z_v\,\phi_v)_{L^2(\Gamma)} \\
	&= \sum_{v\in \V_\D} z_v\left(-a(\phi_v, p_h) + (y,\phi_v)_{L^2(\Gamma)}\right) \\
	&= -\sum_{v\in \V_\D} z_v\, \K_h(p_h)(v) = -z^\top \K_h(p_h).
\end{align*}
This implies the desired result as $\K_h(p_h) = (\K_h\circ P_h)y$.
\end{proof}
Analogous to the continuous case investigated in Section~\ref{sec:basics}
we can derive an optimality system for
\eqref{eq:discrete_objective}--\eqref{eq:discrete_state_equation} which reads:
\textit{Find $u_h\in \R^{n_\D}$, $y_h\in V_h$ with $y_h(v)=u_{h,v}$
$\forall v\in \V_\D$, $p_h\in V_{h,D}$, such that}
\begin{subequations}
\label{eq:discrete_opt_system}
\begin{align}
	a(y_h,w_h) &= (f,w_h)_{L^2(\Gamma)}
	&&\forall w_h\in V_{h,D},
	\label{eq:discrete_state_eq}\\
	a(w_h,p_h) &= (y_h-\bar y,w_h)_{L^2(\Gamma)} &&\forall w_h\in V_{h,D},
	\label{eq:discrete_adjoint_eq} \\
	\beta\,u_h - \K_h p_h &= 0. &&
	\label{eq:discrete_opt_cond}
\end{align}
\end{subequations}
This is a discrete version of the optimality system
\eqref{eq:optimality_system}. Note that $\K_h(p_h)\ne \K(p_h)$.
Using the exact Kirchoff-Neumann operator $\K$ in the discrete
optimality system is basically possible, but as we have seen, only the
variational Kirchhoff-Neumann map $\K_h$ yields the favorable property that
the approaches ``first optimize then discretize'' and ``first
discretize then optimize'' lead to the same discrete optimality system.

In the remainder of this section we study discretization error estimates.
Throughout this article, $c$ stands for a generic positive constant which may
have a different value at each occurrence. Moreover, $c$ is always independent
of $h$ and the functions appearing in the estimates.

The most technical part of the error analysis is the proof of an 
estimate for the approximate Kirchhoff-Neumann
operator $\K_h$. The proof is based on a duality argument and requires some properties of the
discrete harmonic extension $S_h$.
\begin{lemma}
\label{lem:boundedness_Bh}
	The discrete harmonic extension $S_h$ fulfills the stability estimate
	\begin{equation}\label{eq:stability_harmonic_extenstion}
		\norm{S_h u}_{H^1(\Gamma)} \le c\abs{u}_2
	\end{equation}
	for arbitrary $u\in \R^{n_\D}$.
\end{lemma}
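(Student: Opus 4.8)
The plan is to exploit that $S_h u$ is, by its defining relation \eqref{eq:def_harmonic_extension}, the Galerkin (energy-minimizing) extension of the boundary data $u$, and is therefore quasi-optimal: its $H^1(\Gamma)$-norm is controlled by that of \emph{any} competing discrete function carrying the same Dirichlet values. Thus the estimate reduces to exhibiting one cheap extension $E_h u\in V_h$ with $E_h u(v)=u_v$ for $v\in\V_\D$ and $\norm{E_h u}_{H^1(\Gamma)}\le c\,\abs{u}_2$, combined with the continuity and coercivity of $a$ on $H^1_D(\Gamma)$ that already underpin the Lax--Milgram argument in Section~\ref{sec:basics}.

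First I would construct the competitor. Let $E_h u\in V_h$ be the function whose vertex values are $u_v$ at $v\in\V_\D$ and $0$ at $v\in\V_\K$, and which is \emph{affine along each edge} (hence automatically piecewise affine on every subinterval $I_k^e$ and continuous across vertices, so indeed $E_h u\in V_h$). On an edge $e$ with endpoint values $a_e,b_e$ one computes directly $\int_e ((E_h u)')^2\,\dx=(b_e-a_e)^2/L_e$ and $\int_e (E_h u)^2\,\dx=\tfrac{L_e}{3}(a_e^2+a_eb_e+b_e^2)$. Summing over edges, using $\sum_{e\in\E}(a_e^2+b_e^2)=\sum_{v\in\V}\abs{\E_v}\,(E_h u(v))^2=\sum_{v\in\V_\D}\abs{\E_v}\,u_v^2$ together with $\cc\in L^\infty(\Gamma)$, yields
\begin{equation*}
\norm{E_h u}_{H^1(\Gamma)}^2\le c\Big(\tfrac{1}{L_{\min}}+L_{\max}+\norm{\cc}_{L^\infty(\Gamma)}L_{\max}\Big)\max_{v\in\V}\abs{\E_v}\;\abs{u}_2^2,
\end{equation*}
where $L_{\min},L_{\max}$ denote the smallest and largest edge lengths. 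Crucially, all constants depend only on the graph geometry and the potential, but \emph{not} on $h$.

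Next I would run the quasi-optimality argument. Set $w_h:=S_h u-E_h u$; since both functions take the values $u_v$ at every $v\in\V_\D$, we have $w_h\in V_{h,D}$. By \eqref{eq:def_harmonic_extension}, $a(S_h u,w_h)=0$, whence $a(w_h,w_h)=-a(E_h u,w_h)$. Coercivity $a(w_h,w_h)\ge\mu\norm{w_h}_{H^1(\Gamma)}^2$ and continuity $\abs{a(E_h u,w_h)}\le (1+\norm{\cc}_{L^\infty(\Gamma)})\norm{E_h u}_{H^1(\Gamma)}\norm{w_h}_{H^1(\Gamma)}$ give $\norm{w_h}_{H^1(\Gamma)}\le c\,\norm{E_h u}_{H^1(\Gamma)}$, and the triangle inequality then yields
\begin{equation*}
\norm{S_h u}_{H^1(\Gamma)}\le \norm{E_h u}_{H^1(\Gamma)}+\norm{w_h}_{H^1(\Gamma)}\le c\,\norm{E_h u}_{H^1(\Gamma)}\le c\,\abs{u}_2 .
\end{equation*}

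The one point requiring care -- and the reason a naive choice fails -- is the mesh-independence of the competitor. Extending $u$ by the nodal hat functions (i.e.\ $\sum_{v\in\V_\D}u_v\phi_v$, which equals $u_v$ at $v$ but already $0$ at the neighbouring grid node) produces a gradient of order $h^{-1}$ on the adjacent interval, so its energy behaves like $\sum_{v\in\V_\D}u_v^2/h\to\infty$. Spreading the data affinely over the whole edge replaces the length scale $h$ by the fixed edge length $L_e\ge L_{\min}$, which is precisely what keeps the bound uniform in $h$; the coercivity constant $\mu>0$ is likewise $h$-independent, being inherited from the coercivity of $a$ on $H^1_D(\Gamma)$ (the continuous Poincaré--Friedrichs inequality) already used for Lax--Milgram in Section~\ref{sec:basics}.
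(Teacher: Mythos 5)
Your proof is correct and follows essentially the same route as the paper: the competitor $E_h u$ is exactly the auxiliary extension $\tilde S_h u$ used there (affine along each edge, $u_v$ at Dirichlet nodes, $0$ at Kirchhoff nodes), with the same explicit edge-wise computation of its norm, and the same use of Galerkin orthogonality; the only difference is cosmetic (the paper bounds $\norm{S_h u}_{H^1(\Gamma)}^2 \le c\,a(S_h u,\tilde S_h u)$ directly by Cauchy--Schwarz, while you bound the difference $w_h$ and apply the triangle inequality). If anything, your version is slightly more careful in invoking coercivity only on $V_{h,D}$, where it follows from a Poincar\'e--Friedrichs argument even when $\cc$ vanishes, whereas the paper appeals to ellipticity of $a$ on all of $V_h$.
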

\begin{proof}
	For technical reasons we define a further
	discrete extension $\tilde S_h\colon \R^{n_\D}\to V_h$ defined by
	\begin{equation*}
		[\tilde S_h u]|_e \in \mathcal P_1\quad \forall e\in \E,
		\quad [\tilde S_h u](v) = \begin{cases}
			u_v &\forall v\in \V_\D,\\
			0 &\forall v\in \V_\K.
		\end{cases}
	\end{equation*}
	Obviously, $\tilde S_h u$ is a first-order polynomial on each edge
	$e\in \E$ and thus, $\tilde S_h u\in V_h$.
	As a consequence, we deduce together with the $V_h$-ellipticity of $a(\cdot,\cdot)$,
	the definition of $S_h$ and the fact that $(S_h-\tilde S_h)\,u\in V_{h,D}$
	\begin{equation}\label{eq:harmon_ext_less_aux_ext}
		\norm{S_h u}_{H^1(\Gamma)}^2 \le c\,a(S_h u, S_h u) = c\,a(S_h u, \tilde S_h u)
		\le c\norm{S_h u}_{H^1(\Gamma)}\norm{\tilde S_h u}_{H^1(\Gamma)}.
	\end{equation}
	For an edge $e\in \E$ with vertices $v$ and $v'$ we easily show the
	following properties.
	If $v,v'\in \V_\K$ there holds $[\tilde S_h u]_e=0$,
	and otherwise the values of the $H^1(e)$-seminorms are
	\begin{align*}
	\norm{(\tilde S_h u)'}_{L^2(e)}&= L_e^{-1/2}\abs{u_v}&&\mbox{if}\ v\in
	\V_\D, v'\in \V_\K,\\
	\norm{(\tilde S_h u)'}_{L^2(e)}&= L_e^{-1/2}\abs{u_v-u_{v'}}&&\mbox{if}\ v,v'\in
	\V_\D.
	\end{align*}
	Analogously, we can compute the $L^2(e)$-parts in the $H^1(\Gamma)$ norm and get
	\begin{align*}
	\norm{\tilde S_h u}_{L^2(e)} &\le c\,L_e^{1/2} \abs{u_v}
	&& \mbox{if}\ v\in \V_\D, v'\in \V_{\K},\\
	\norm{\tilde S_h u}_{L^2(e)} &\le c\,L_e^{1/2} \left(\abs{u_v} + \abs{u_{v'}}\right)
	&& \mbox{if}\ v, v'\in \V_\D.
	\end{align*}	
	After summation over all edges and taking into account
	\eqref{eq:harmon_ext_less_aux_ext} we conclude the property \eqref{eq:stability_harmonic_extenstion}.
\end{proof}
As an important consequence, we also obtain a stability estimate for the
adjoint $S_h^*$ and thus, for the discrete Kirchoff-Neumann operator $\K_h$.
\begin{lemma}\label{lem:stability_Kh}
	For arbitrary $y\in L^2(\Gamma)$ there holds the stability estimate
	\begin{equation*}
		\abs{S_h^*\,y}_2 = \abs{\K_h(P_h y)}_2 \le c\norm{y}_{L^2(\Gamma)}.
	\end{equation*}
\end{lemma}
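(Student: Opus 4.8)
The plan is to read off the stated equality from the adjoint representation and then obtain the inequality by a pure duality argument, so that all the genuine work is inherited from the two preceding lemmas. The equality $\abs{S_h^*\,y}_2 = \abs{\K_h(P_h y)}_2$ is immediate: Lemma~\ref{lem:adjoint_of_S} gives $S_h^* = -\K_h\circ P_h$, hence $S_h^* y = -\K_h(P_h y)$ and the two Euclidean norms coincide. Thus only the bound against $\norm{y}_{L^2(\Gamma)}$ remains, and the key observation is that $S_h^*$ is, by construction, the Hilbert-space adjoint of $S_h$ with respect to the Euclidean inner product on $\R^{n_\D}$ and the $L^2(\Gamma)$-inner product. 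Its operator norm therefore equals that of $S_h$, which has already been controlled.

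Concretely, first I would pair $S_h^* y$ against an arbitrary test vector $z\in\R^{n_\D}$ and invoke the defining adjoint relation followed by the Cauchy--Schwarz inequality,
\[
(S_h^* y, z)_{\R^{n_\D}} = (y, S_h z)_{L^2(\Gamma)} \le \norm{y}_{L^2(\Gamma)}\,\norm{S_h z}_{L^2(\Gamma)}.
\]
Next I would pass from the $L^2(\Gamma)$-norm of $S_h z$ to its $H^1(\Gamma)$-norm, which is trivially an upper bound, and apply the stability estimate \eqref{eq:stability_harmonic_extenstion} of Lemma~\ref{lem:boundedness_Bh},
\[
\norm{S_h z}_{L^2(\Gamma)} \le \norm{S_h z}_{H^1(\Gamma)} \le c\,\abs{z}_2.
\]
Combining the two displays yields $(S_h^* y, z)_{\R^{n_\D}} \le c\,\norm{y}_{L^2(\Gamma)}\,\abs{z}_2$ for every $z\in\R^{n_\D}$. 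Finally, choosing $z = S_h^* y$ and dividing by $\abs{S_h^* y}_2$ (the estimate being trivial when $S_h^* y = 0$) produces the desired bound $\abs{S_h^* y}_2 \le c\,\norm{y}_{L^2(\Gamma)}$.

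I do not anticipate a real obstacle: the substantive estimate lives entirely in Lemma~\ref{lem:boundedness_Bh}, and the present statement is essentially the fact that the norm of an adjoint operator equals the norm of the operator. The only point deserving a moment's care is that Lemma~\ref{lem:boundedness_Bh} controls the stronger $H^1(\Gamma)$-norm of $S_h z$, whereas the duality pairing only exposes the $L^2(\Gamma)$-norm; since $\norm{\cdot}_{L^2(\Gamma)} \le \norm{\cdot}_{H^1(\Gamma)}$ this mismatch is harmless, but I would record it explicitly so that the generic constant $c$ is correctly inherited from the earlier estimate.
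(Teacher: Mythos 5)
Your proof is correct and follows essentially the same route as the paper: a duality argument over test vectors $z\in\R^{n_\D}$ that reduces the bound to the stability of $S_h$ from Lemma~\ref{lem:boundedness_Bh} via the Cauchy--Schwarz inequality. The only cosmetic difference is that you invoke the adjoint identity $(S_h^*y,z)=(y,S_hz)_{L^2(\Gamma)}$ directly from Lemma~\ref{lem:adjoint_of_S}, whereas the paper re-derives it inline by expanding the definitions of $\K_h$ and $P_h$ and observing that $a(S_hu,P_hy)=0$.
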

\begin{proof}
	First, we apply the definitions of $\K_h$ from
	\eqref{eq:discrete_Kirchhoff}
	and $P_h$ from \eqref{eq:def_Ph} taking into account that
	$\sum_{v\in \V_\D} u_v\,\phi_v -	S_h u\in V_{h,D}$, and obtain
	\begin{align*}
		\abs{\K_h(P_h y)}_2
		&= \sup_{\genfrac{}{}{0pt}{}{u\in \R^{n_\D}}{\abs{u}_2=1}}
		\abs{u^\top\,\K_h(P_h y)} \\
		&= \sup_{\genfrac{}{}{0pt}{}{u\in \R^{n_\D}}{\abs{u}_2=1}}\abs{\sum_{v\in \V_\D} u_v \left[a(\phi_v,P_h y)	- (y,\phi_v)_{L^2(\Gamma)}\right]} \\
				&= \sup_{\genfrac{}{}{0pt}{}{u\in \R^{n_\D}}{\abs{u}_2=1}}\abs{ a(S_h u,P_h y)	- (y,S_h u)_{L^2(\Gamma)}}.
	\end{align*}
	The first term on the right-hand side vanishes due to the
	definition of $S_h$, see \eqref{eq:def_harmonic_extension}, and $P_h y\in V_{h,D}$.
	The last term can be bounded by means of the Cauchy-Schwarz inequality
	and the stability estimate for $S_h$ proved in
	Lemma~\ref{lem:boundedness_Bh}.
\end{proof}
Now, we are in the position to derive the following general error estimate:
\begin{lemma}\label{lem:general_estimate}
	Let $(u,y)\in \R^{n_\D}\times H^1(\Gamma)$ and $(u_h,y_h)\in \R^{n_\D}\times V_h$ be the solutions of
	\eqref{obj}--\eqref{constraint} and
	\eqref{eq:discrete_objective}--\eqref{eq:discrete_state_equation}, respectively.
	Then, the error estimate
	\begin{equation*}
		\beta\abs{u-u_h}_{2}
		\le c\left(\norm{y-\tilde y_h}_{L^2(\Gamma)} +
		\abs{\K p - \K_h \tilde p_h}_{2}\right)
	\end{equation*}
	is fulfilled, where $p$ is the adjoint state
	corresponding to $u$, and $\tilde y_h= S_h(u) + y_{f,h}$ and $\tilde p_h= P_h(y-\bar y)$
	are the Ritz projections of $y$ and $p$.
\end{lemma}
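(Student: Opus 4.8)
The plan is to work directly with the optimality conditions and to isolate a positive semidefinite operator that absorbs the self-coupling contribution in the control error. The continuous and discrete optimality conditions read $\beta u = \K p$ and $\beta u_h = \K_h p_h$, so that $\beta(u - u_h) = \K p - \K_h p_h$. First I would insert the intermediate quantity $\K_h \tilde p_h$ and write
\[
\beta(u - u_h) = \left(\K p - \K_h \tilde p_h\right) + \K_h(\tilde p_h - p_h).
\]
The first summand is precisely one of the two terms appearing in the claimed bound, so all the work lies in controlling the second.

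For the second summand I would exploit the linearity of $P_h$. Since $\tilde p_h = P_h(y - \bar y)$ and $p_h = P_h(y_h - \bar y)$, we get $\tilde p_h - p_h = P_h(y - y_h)$, and hence, by Lemma~\ref{lem:adjoint_of_S}, $\K_h(\tilde p_h - p_h) = (\K_h \circ P_h)(y - y_h) = -S_h^*(y - y_h)$. Next I would split the state error along the discrete control--to--state map: because $y_h = S_h u_h + y_{f,h}$ while $\tilde y_h = S_h u + y_{f,h}$, one has $\tilde y_h - y_h = S_h(u - u_h)$, so that $y - y_h = (y - \tilde y_h) + S_h(u - u_h)$. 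Substituting this back and moving all $(u-u_h)$ contributions to the left produces the identity
\[
\left(\beta\,\mathrm{Id} + S_h^* S_h\right)(u - u_h) = \left(\K p - \K_h \tilde p_h\right) - S_h^*(y - \tilde y_h).
\]

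The decisive observation is that $S_h^* S_h$ is symmetric positive semidefinite on $\R^{n_\D}$, so $\beta\,\mathrm{Id} + S_h^* S_h$ has smallest eigenvalue at least $\beta$. Testing the identity against $u - u_h$ and using Cauchy--Schwarz then gives
\[
\beta\abs{u - u_h}_2^2 \le (u - u_h)^\top\!\left(\beta\,\mathrm{Id} + S_h^* S_h\right)(u - u_h) \le \abs{u - u_h}_2\,\abs{\left(\K p - \K_h \tilde p_h\right) - S_h^*(y - \tilde y_h)}_2,
\]
and dividing by $\abs{u - u_h}_2$ (the case $u = u_h$ being trivial) followed by a triangle inequality leaves $\beta\abs{u - u_h}_2 \le \abs{\K p - \K_h \tilde p_h}_2 + \abs{S_h^*(y - \tilde y_h)}_2$. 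The last term is bounded by $c\norm{y - \tilde y_h}_{L^2(\Gamma)}$ via the stability estimate for $S_h^*$ from Lemma~\ref{lem:stability_Kh}, which closes the argument.

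I expect the main obstacle to be recognizing that a naive triangle-inequality estimate fails here: bounding $\K_h(\tilde p_h - p_h)$ crudely reproduces a term proportional to $\abs{u - u_h}_2$ on the right-hand side, with an $h$-independent but possibly large constant that cannot be absorbed for arbitrary $\beta$. The essential maneuver is therefore to retain the self-coupling term $S_h^* S_h(u - u_h)$ in exact form, shift it to the left, and discard it by positive semidefiniteness rather than estimate it.
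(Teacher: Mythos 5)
Your proof is correct and follows essentially the same route as the paper: subtract the two optimality conditions, insert the intermediate quantities $\K_h\tilde p_h$ and $\tilde y_h$, identify the self-coupling term $S_h^*S_h(u-u_h)$, discard it by sign (the paper keeps it on the right and notes it is non-positive when tested with $u-u_h$; you move it to the left and invoke positive semidefiniteness, which is the same maneuver), and bound the remaining term with the stability of $S_h^*$ from Lemma~\ref{lem:stability_Kh}. No substantive difference.
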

\begin{proof}
Subtraction of the optimality conditions \eqref{eq:opt_cond} for the continuous problem
and \eqref{eq:discrete_opt_cond} for the discrete problem and using $y_h = S_h
u_h$ and $p_h = P_h(S_h u_h + y_{f,h}-\bar y)$ leads to
\begin{align}
	\label{eq:general_estimate}
	&\quad\beta(u-u_h) = \K p - \K_h p_h \nonumber\\
	& =(\K p - \K_h(P_h(y-\bar y))
	+ \K_h(P_h(y-(S_h u+y_{f,h}))
	+ \K_h(P_h(S_h(u-u_h)))\nonumber \\
	&= (\K p - \K_h \tilde p_h) + S_h^*(y-\tilde y_h) - S_h^*(S_h(u-u_h)).
\end{align}
Note that in the first step, we simply introduced the intermediate functions $\K_h(P_h y)$ and $\K_h(P_h (S_h u))$.
In the last step we inserted the Ritz projections $\tilde y_h = S_h u+y_{f,h}$
and $\tilde p_h = P_h(y-\bar y)$ of $y$ and $p$, respectively, as well as $S_h^*=-\K_h\circ P_h$ .

Next, we multiply the equation \eqref{eq:general_estimate} by $u-u_h$ and get for the first two terms
on the right-hand side
\begin{align*}
	(\K p - \K_h \tilde p_h)^\top (u-u_h)
	&\le \abs{\K p - \K_h \tilde p_h}_{2} \abs{u-u_h}_{2}, \\
	S_h^*(y-\tilde y_h)^\top (u-u_h)
	&\le c\norm{y-\tilde y_h}_{L^2(\Gamma)}\abs{u-u_h}_2.	
\end{align*}
In the latter estimate we used the stability properties of $S_h^*$ proved
in Lemma~\ref{lem:stability_Kh}.
The last term on the right-hand side of \eqref{eq:general_estimate}, tested
with $u-u_h$, is non-positive and can be neglected.
\end{proof}

It remains to derive error estimates for the two terms on the right-hand side of the general
estimate derived in Lemma \ref{lem:general_estimate}. The first term is a simple $L^2(\Gamma)$-error and an estimate can be simply concluded from the $H^1(\Gamma)$-error estimate
\begin{equation}\label{eq:H1_error_estimate}
	\abs{y-\tilde y_h}_{H^1(\Gamma)} \le c\,h\abs{y}_{H^2(\Gamma)}
\end{equation}
proved in \cite[Theorem 3.2]{ariolifinite} and an application of the
Aubin-Nitsche method. These arguments imply
\begin{equation}\label{eq:L2_error_estimate}
	\norm{y-\tilde y_h}_{L^2(\Gamma)} \le c\,h^2\abs{y}_{H^2(\Gamma)}.
\end{equation}
The second term, namely the error estimate for the discrete Kirchoff-Neumann operator,
is more challenging.
\begin{theorem}
\label{thm:kirchhoff_estimate}
	Let $p\in H_D^1(\Gamma)$ be the solution of
	\begin{equation*}
		a(w,p) = (y,w)_{L^2(\Gamma)}\qquad\forall w\in H_D^1,
	\end{equation*}
	and denote by $\tilde p_h\in V_{h,D}$ its Ritz-projection, i.\,e., $a(w_h,p-\tilde p_h) = 0$ for all $w_h\in V_{h,D}$.
	Then, the error estimate
	\begin{equation}\label{eq:error_estimate_Kirchhoff}
		\abs{\K p - \K_h \tilde p_h}_2 \le c\,h\abs{p}_{H^2(\Gamma)}
	\end{equation}
	is fulfilled provided that $p\in H^2(\Gamma)$.
\end{theorem}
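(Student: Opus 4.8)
The plan is to reduce the estimate to the ordinary $H^1(\Gamma)$-error of the Ritz projection, by first giving the \emph{continuous} Kirchhoff operator exactly the same variational representation that defines its discrete counterpart $\K_h$ in \eqref{eq:discrete_Kirchhoff}.

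First I would establish, for every $v\in\V_\D$, the identity
\begin{equation*}
(\K p)(v) = a(\phi_v,p) - (y,\phi_v)_{L^2(\Gamma)}.
\end{equation*}
Since elliptic regularity gives $p\in H^2(\Gamma)$ solving $-p'' + \cc\,p = y$ a.\,e., this follows by applying the integration-by-parts formula \eqref{eq:partial_integration} on the edges incident to $v$ to the term $a(\phi_v,p)$: the nodal basis function $\phi_v$ equals $1$ at $v$ and vanishes at every other vertex, so the boundary contributions collapse to $(\K p)(v)$, while the volume terms combine with $(y,\phi_v)_{L^2(\Gamma)}$ precisely because $p$ solves the strong equation. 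Comparing with \eqref{eq:discrete_Kirchhoff}, and noting that $\tilde p_h = P_h(y)$ is generated by the \emph{same} right-hand side $y$, the two $(y,\phi_v)$-terms cancel and I obtain, writing $\eta := p-\tilde p_h$,
\begin{equation*}
(\K p - \K_h\tilde p_h)(v) = a(\phi_v,\eta)\qquad\forall v\in\V_\D.
\end{equation*}

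Next I would pass to the $\abs{\cdot}_2$-norm by duality in $\R^{n_\D}$, writing
\begin{equation*}
\abs{\K p - \K_h\tilde p_h}_2 = \sup_{\abs{u}_2=1}\ \sum_{v\in\V_\D} u_v\,a(\phi_v,\eta) = \sup_{\abs{u}_2=1}\ a\Big(\textstyle\sum_{v\in\V_\D} u_v\,\phi_v,\ \eta\Big).
\end{equation*}
A naive bound via $\norm{\sum_v u_v\phi_v}_{H^1(\Gamma)}$ is useless here, since the hat functions $\phi_v$ have $H^1(\Gamma)$-norm of order $h^{-1/2}$ and would only produce the rate $h^{1/2}$. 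The decisive step is therefore to replace $\sum_v u_v\phi_v$ by the discrete harmonic extension $S_h u$: both functions lie in $V_h$ and take the value $u_v$ at every $v\in\V_\D$, hence their difference lies in $V_{h,D}$, and the Galerkin orthogonality $a(w_h,\eta)=0$ for all $w_h\in V_{h,D}$ gives
\begin{equation*}
a\Big(\textstyle\sum_{v\in\V_\D} u_v\,\phi_v,\ \eta\Big) = a(S_h u,\eta).
\end{equation*}

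Finally I would close the estimate using the continuity of $a(\cdot,\cdot)$ on $H^1(\Gamma)\times H^1(\Gamma)$, the uniform $H^1(\Gamma)$-stability of the discrete harmonic extension from Lemma~\ref{lem:boundedness_Bh}, and the standard Ritz bound $\norm{\eta}_{H^1(\Gamma)}=\norm{p-\tilde p_h}_{H^1(\Gamma)}\le c\,h\,\abs{p}_{H^2(\Gamma)}$ (the analogue of \eqref{eq:H1_error_estimate} for the adjoint), namely
\begin{equation*}
a(S_h u,\eta)\le c\,\norm{S_h u}_{H^1(\Gamma)}\,\norm{\eta}_{H^1(\Gamma)}\le c\,\abs{u}_2\,h\,\abs{p}_{H^2(\Gamma)}.
\end{equation*}
Taking the supremum over $\abs{u}_2=1$ then yields the claim. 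I expect the main obstacle to lie in the first and third steps: pinning down the correct orientation and sign in the integration-by-parts identity so that the continuous $\K$ matches the variational definition of $\K_h$ \emph{exactly}, and recognizing that the $h^{-1/2}$ blow-up of $\norm{\phi_v}_{H^1(\Gamma)}$ must be circumvented by trading $\sum_v u_v\phi_v$ for the $H^1$-stable extension $S_h u$ through Galerkin orthogonality. This last point is precisely where the properties of the discrete harmonic extension enter the argument.
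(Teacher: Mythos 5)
Your proposal is correct and follows essentially the same route as the paper's proof: the variational representation of $\K p$ via the integration-by-parts formula \eqref{eq:partial_integration}, the cancellation of the $(y,\phi_v)$-terms against the definition \eqref{eq:discrete_Kirchhoff} of $\K_h$, the swap of $\sum_{v}u_v\phi_v$ for $S_hu$ through Galerkin orthogonality, and the conclusion via Lemma~\ref{lem:boundedness_Bh} and the $H^1(\Gamma)$-estimate \eqref{eq:H1_error_estimate}. Your remark that the naive bound through $\norm{\phi_v}_{H^1(\Gamma)}\sim h^{-1/2}$ would only give the rate $h^{1/2}$ correctly identifies why the discrete harmonic extension is indispensable here.
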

\begin{proof}
	Let $u\in \R^{n_\D}$ be an arbitrary vector.
	Applying the integration-by-parts formula
	\eqref{eq:partial_integration}, which implies
	\begin{equation*}
		(\K p)^\top u = \sum_{v\in \V_\D} u_v \left[a(\phi_v,p) - (y,\phi_v)_{L^2(\Gamma)}\right],
	\end{equation*}
	the definition of $\K_h$ from \eqref{eq:discrete_Kirchhoff} and
	the Galerkin orthogonality using the fact that $\sum_{v\in \V_\D} u_v\,\phi_v - S_h u\in V_{h,D}$, yields
	\begin{align*}
		(\K p - \K_h \tilde p_h)^\top u
		&= \sum_{v\in \V_\D}u_v\,a(\phi_v,p-\tilde p_h)= a(S_h u,p-\tilde p_h)\\
		&\le c\norm{p-\tilde p_h}_{H^1(\Gamma)} \norm{S_h u}_{H^1(\Gamma)}.
	\end{align*}
	Together with the $H^1(\Gamma)$-estimate \eqref{eq:H1_error_estimate}
	and Lemma \ref{lem:boundedness_Bh}
	one arrives at
	\begin{equation*}
		\abs{\K p - \K_h\tilde p_h}_2
		= \sup_{\genfrac{}{}{0pt}{}{u\in \R^{n_\D}}{\abs{u}_2 = 1}}
		\abs{(\K p - \K_h \tilde p_h)^\top u} \le c\,h\abs{p}_{H^2(\Gamma)}.
	\end{equation*}
\end{proof}

Now we are in the position to state the main result of this section.
\begin{theorem}
\label{thm:estimate_optimal_control}
	Let $f, y_d\in L^2(\Gamma)$ and $\beta>0$ be arbitrary. The solutions $(u,y)\in \R^{n_\D}\times H^1(\Gamma)$ and $(u_h,y_h)\in \R^{n_\D}\times V_h$ of \eqref{obj}--\eqref{constraint} and
	\eqref{eq:discrete_objective}--\eqref{eq:discrete_state_equation}, respectively,
	fulfill the error estimate
	\begin{equation*}
		\abs{u-u_h}_2 + \norm{y-y_h}_{H^1(\Gamma)} \le
		c\,h\left(\abs{y}_{H^2(\Gamma)} + \abs{p}_{H^2(\Gamma)}\right)
	\end{equation*}
	with a constant $c>0$ independent of $h$.
\end{theorem}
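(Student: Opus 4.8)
The plan is to assemble the estimate from the ingredients already established, since all the genuinely technical work has been carried out in Lemma~\ref{lem:general_estimate}, Theorem~\ref{thm:kirchhoff_estimate} and the stability results. First I would bound the control error. Lemma~\ref{lem:general_estimate} provides
\begin{equation*}
	\beta\abs{u-u_h}_2 \le c\left(\norm{y-\tilde y_h}_{L^2(\Gamma)} + \abs{\K p - \K_h\tilde p_h}_2\right),
\end{equation*}
so I would insert the $L^2(\Gamma)$-estimate \eqref{eq:L2_error_estimate}, which controls the first term by $c\,h^2\abs{y}_{H^2(\Gamma)}$, together with the Kirchhoff--Neumann estimate \eqref{eq:error_estimate_Kirchhoff} of Theorem~\ref{thm:kirchhoff_estimate}, which controls the second term by $c\,h\abs{p}_{H^2(\Gamma)}$. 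Since $h$ remains bounded one has $h^2\le c\,h$, so the first contribution is of higher order, and dividing by $\beta>0$ yields
\begin{equation*}
	\abs{u-u_h}_2 \le \frac{c}{\beta}\,h\left(\abs{y}_{H^2(\Gamma)} + \abs{p}_{H^2(\Gamma)}\right).
\end{equation*}

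Next I would treat the state error by the triangle inequality, splitting off the Ritz projection $\tilde y_h = S_h(u) + y_{f,h}$ of $y$,
\begin{equation*}
	\norm{y-y_h}_{H^1(\Gamma)} \le \norm{y-\tilde y_h}_{H^1(\Gamma)} + \norm{\tilde y_h - y_h}_{H^1(\Gamma)}.
\end{equation*}
The first term is controlled directly by the finite element estimate \eqref{eq:H1_error_estimate}, giving $c\,h\abs{y}_{H^2(\Gamma)}$. For the second term the key observation is that the continuous and discrete states share the same discrete source contribution $y_{f,h}$, since $y_h = S_h(u_h) + y_{f,h}$, so by linearity of the discrete harmonic extension one has $\tilde y_h - y_h = S_h(u-u_h)$. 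The stability estimate of Lemma~\ref{lem:boundedness_Bh} then transfers the control error into the state norm, $\norm{S_h(u-u_h)}_{H^1(\Gamma)} \le c\abs{u-u_h}_2$, and combining with the control bound above gives a term of order $h$. Adding the two pieces together with the control estimate produces the claimed bound.

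The proof involves no serious obstacle: it is a synthesis of the preceding results through linearity and the triangle inequality. The only point requiring care is the bookkeeping that makes $\tilde y_h - y_h$ collapse to $S_h(u-u_h)$, which hinges on $\tilde y_h$ being \emph{precisely} the Ritz projection of $y$ (so that the $f$-parts cancel and Galerkin orthogonality holds on $V_{h,D}$) and on $S_h$ being linear. A secondary subtlety worth flagging is that the constant in the final estimate inherits a factor $\beta^{-1}$ from the control bound; it is independent of $h$, as asserted, but deteriorates as $\beta\to 0$, reflecting the ill-posedness of the control problem in the singular limit.
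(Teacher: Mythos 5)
Your proposal is correct and follows essentially the same route as the paper: the control error is obtained by inserting \eqref{eq:L2_error_estimate} and Theorem~\ref{thm:kirchhoff_estimate} into Lemma~\ref{lem:general_estimate}, and the state error by the triangle inequality with the Ritz projection $S_h u + y_{f,h}$, using $\tilde y_h - y_h = S_h(u-u_h)$ together with the stability of $S_h$ from Lemma~\ref{lem:boundedness_Bh}. Your remark that the constant inherits a factor $\beta^{-1}$ is an accurate observation consistent with the theorem's claim of $h$-independence only.
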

\begin{proof}
The estimate for the control follows after insertion of the
estimate \eqref{eq:L2_error_estimate} and the result of Theorem
\ref{thm:kirchhoff_estimate} into Lemma \ref{lem:general_estimate}.

To obtain an estimate for the state we apply the triangle inequality
\begin{align*}
	\norm{y-y_h}_{H^1(\Gamma)} \le \norm{y-(S_h u + y_{f,h})}_{H^1(\Gamma)}
	+ \norm{S_h(u-u_h)}_{H^1(\Gamma)},
\end{align*}
where we used the decomposition $y_h = S_h u_h + y_{f,h}$, compare
\eqref{eq:def_harmonic_extension}--\eqref{eq:def_yhf}. Note that $S_h u + y_{f,h}$
is the Ritz projection of $y$ and thus, the first term on the right-hand side can be bounded
by means of \eqref{eq:H1_error_estimate}. An estimate for the second term follows from the stability
of $S_h$ shown in Lemma \ref{lem:boundedness_Bh} and the estimate derived for the controls.
\end{proof}

In the numerical experiments in Section~\ref{sec:experiments} we will confirm that this
convergence rate is sharp. Although the estimate used for the state promises
quadratic convergence, the rate is limited due to the approximation of the
discrete Kirchhoff-Neumann operator. However, there exist ideas based on local
mesh refinement which would allow for quadratic convergence in
\eqref{eq:error_estimate_Kirchhoff} as well. For
further details we refer to \cite{PfeffererWinkler2018preprint} where Dirichlet
control problems for elliptic equations in planar and bounded domains are
studied.
An extension of these ideas to the context of graph networks is subject of
further research.

\section{Structure of the system matrices and preconditioning}
\label{sec:preconditioning}
We now discuss the structure of the discretized equations based on the considerations from the previous section as well as the results given in \cite{ariolifinite}.

\subsection{Assembly of finite element matrices}

We use a numbering of the nodes such that the \textit{interior nodes} come first followed by the original graph nodes, (cf. \eqref{eq:sorting_dofs}). We note that the incidence matrix for the \textit{interior nodes} of an edge $e\in \E$ is structured and can be written as
\begin{equation*}
	\Eb_e=
	\begin{bmatrix}
	-1&1&&&\\
	&-1&1&&\\
	&&\ddots&\ddots&\\
	&&&-1&1\\
	\end{bmatrix}\in\R^{n_e-1\times n_e}.
\end{equation*}

The incidence matrix for all interior nodes on all edges is then given via
\begin{equation*}
	\Eb_{\text i}=\textbf{blkdiag}(\left\lbrace\Eb_e\right\rbrace_{e\in \E})\in\R^{m(n_e-1)\times mn_e},
\end{equation*}
where $n_e$ is the same for all edges for simplicity.
To also incorporate the original graph nodes we now introduce the matrices
\begin{equation*}
	{\Eb}^{+}=\frac{1}{2}(\Eb+\abs{\Eb}),\qquad{\Eb}^{-}=\frac{1}{2}(\Eb-\abs{\Eb}), 
\end{equation*}
with $\abs{\Eb}$ the component-wise absolute value of the incidence matrix $\Eb$ of the original graph. We now want to establish the incidence matrix for the extended graph based on the original graph nodes. For this consider
\begin{equation*}
	\hat{\Eb}_{j}^{+}={\Eb}_{j}^{+}\otimes\underbrace{[1,0,\ldots,0]}_{\R^{1\times n_{e_j}}}=[{\Eb}_{j}^{+},\textbf{0},\ldots,\textbf{0}]\in\R^{n\times n_{e_j}}
\end{equation*}
to incorporate outgoing edges, where  ${\Eb}^{+}_j$ indicates the $j$-th column of the matrix  ${\Eb}^{+}$ and $n_{e_j}=n_e$ is the number of internal nodes on the edge $e_j$. Here, the index $j$ runs from $1$ to $m=|\E|$. For the incoming edges we use
\begin{equation*}
	\hat{\Eb}_{j}^{-}={\Eb}_{j}^{-}\otimes[0,0,\ldots,1]=[\textbf{0},\ldots,\textbf{0},{\Eb}_{j}^{-}]\in\R^{n\times n_{e}}.
\end{equation*}
We see that due to the incorporation of the interior nodes this somewhat stretches the incidence matrix of the original graph. As a result the part of the incidence matrix representing the original nodes becomes
\[
\Eb_{\text v}=
\left[
\hat{\Eb}_{1}^{+}+\hat{\Eb}_{1}^{-},\hat{\Eb}_{2}^{+}+\hat{\Eb}_{2}^{-},\ldots,
\hat{\Eb}_{m}^{+}+\hat{\Eb}_{m}^{-}
\right]\in\R^{n\times mn_e}.
\]
We then obtain the incidence matrix of the extended graph as
\[
\tilde{\Eb}=
\begin{bmatrix}
\Eb_{\text i}\\
\Eb_{\text v}
\end{bmatrix}
\in \R^{m(n_e-1)+n\times mn_e}.
\]
In order to build the finite element matrices we incorporate the mesh-parameter $h_e$ via
\[
\Wb_E=\textbf{blkdiag}\left(\left\lbrace\frac{1}{h_e}\Ib_{n_e}\right\rbrace_{e\in\mathcal{E}} \right)\in\R^{(n_{\text i}+m)\times (n_{\text i}+m)},
\]
where $n_{\text i}=\sum_{e\in \mathcal{E}}(n_e-1)$ and in the case that all
edges have the same number of interior nodes there holds $n_{\text i}=m(n_e-1)$.
As was shown in \cite{ariolifinite} the stiffness matrix $\Ab$ has the following structure
\[
\Ab=
\begin{bmatrix}
\Eb_{\text i}\\
\Eb_{\text v}
\end{bmatrix}
\Wb_E
\begin{bmatrix}
\Eb_{\text i}^{\top}&
\Eb_{\text v}^{\top}
\end{bmatrix}=
\begin{bmatrix}
\Eb_{\text i}\Wb_E\Eb_{\text i}^{\top}&\Eb_{\text i}\Wb_E\Eb_{\text v}^{\top}\\
\Eb_{\text v}\Wb_E\Eb_{\text i}^{\top}&\Eb_{\text v}\Wb_E\Eb_{\text v}^{\top}
\end{bmatrix}
.
\] 
According to \cite{ariolifinite} the mass matrix can be written as
\[
\Mb=\frac{1}{6}\left(\abs{\tilde{\Eb}}\widehat{\Wb}_E\abs{\tilde{\Eb}}^{\top}+\textrm{diag}\left(
\left\lbrace\left(\abs{\tilde{\Eb}}\widehat{\Wb}_E\abs{\tilde{\Eb}}^{\top}\right)_{i,i}\right\rbrace_{i=1}^{n_{\text i}+m} \right)\right)
\]
with
\[
\widehat{\Wb}_E=\textbf{blkdiag}\left(\left\lbrace h_e\Ib_{n_e}\right\rbrace_{e\in\mathcal{ E}}\right)\in\R^{(n_{\text i}+m)\times (n_{\text i}+m)}.
\]
With the discretization of both the mass and the stiffness term we are now
able to obtain the matrix representation of the optimization problem where the
mass and stiffness matrix are split according to free and Dirichlet-control
variables. The mass matrix incorporating the term $\cc$ can be assembled in a similar fashion (cf. \cite{ariolifinite}) and we refer to it as $\Mb_{\cc}$.
Finally, the system matrix representing the discrete bilinear form on the left-hand side of \eqref{Galerkin_form} is denoted by
\begin{equation*}
	\Kb = \Ab + \Mb_{\cc}.
\end{equation*}

\subsection{The discrete optimality system}

We start with the ansatz \eqref{eq:ansatz_discretization},
which allows a representation of a function $y_h\in V_h$ by the coefficient vectors $\by\in \R^N$, $N=m(n_e-1)+n_\K + n_\D$ with $n=n_\K + n_\D$, that are sorted in the form
\begin{equation}\label{eq:sorting_dofs}
\by =
\begin{bmatrix}
\by_I \\ \by_K \\ \by_D
\end{bmatrix}
\hat{=}
\left[
\begin{array}{l}
\text{coefficients related to $\psi_j^e$, $j=1,\ldots,n_e-1$, $e\in \E$} \\
\text{coefficients related to $\phi_v$, $v\in\K$} \\
\text{coefficients related to $\phi_v$, $v\in\D$}
\end{array}
\right].
\end{equation}
In the same way, we may split the matrices $\Kb$ and $\Mb$
into
\begin{equation*}
\Kb = \begin{bmatrix}
\Kb_{II} & \Kb_{IK} & \Kb_{ID} \\
\Kb_{KI} & \Kb_{KK} & 0 \\
\Kb_{DI} & 0 & \Kb_{DD} 
\end{bmatrix}
\quad\mbox{and}
\quad
\Mb = \begin{bmatrix}
\Mb_{II} & \Mb_{IK} & \Mb_{ID} \\
\Mb_{KI} & \Mb_{KK} & 0 \\
\Mb_{DI} & 0 & \Mb_{DD} 
\end{bmatrix},		
\end{equation*}
respectively.
In order to get a description of the discrete optimality system
\eqref{eq:discrete_opt_system} in a matrix-vector notation, we distinguish only among
Dirichlet nodes and free nodes and thus define
\begin{equation*}
\by_F = \begin{bmatrix}
\by_I\\ \by_K 
\end{bmatrix},\quad
\Kb_{FF} = \begin{bmatrix}
\Kb_{II} & \Kb_{IK} \\ \Kb_{KI} & \Kb_{KK}
\end{bmatrix}
,\quad
\Kb_{FD} = \begin{bmatrix}
\Kb_{ID} \\ 0
\end{bmatrix},\quad
\Kb_{DF} = \Kb_{FD}^\top.
\end{equation*}
Analogous definitions are used for the mass matrix $\Mb$.

Furthermore, we define the load vector $\bf$ by $\bf^\top \bv
= (f,v_h)_{L^2(\Gamma)}$ and the vector $\bar\by$ by $\bar \by^\top \bv = (\bar y, v_h)_{L^2(\Gamma)}$.
These vectors are decomposed as well in the form
\begin{equation*}
\bf = \begin{pmatrix} \bf_F \\
\bf_D\end{pmatrix},
\qquad
\bar \by = \begin{pmatrix} \bar \by_F \\
\bar \by_D \end{pmatrix}.
\end{equation*}

With the previously introduced matrices and vectors we can write the optimality system
\eqref{eq:discrete_opt_system} in the form
\begin{equation}\label{eq:discrete_opt_sys}
\begin{bmatrix}
\Mb_{FF} & \Mb_{FD} & \Kb_{FF}^{\top} \\
\Mb_{DF} & \Mb_{DD}+\beta\Ib & \Kb_{FD}^{\top}		\\
\Kb_{FF} & \Kb_{FD} & \mathbf{0} \\
\end{bmatrix}
\begin{bmatrix}
\by_F \\ \bu \\ \bp_F \\
\end{bmatrix}
=
\begin{bmatrix}
\bar \by_F \\
\bar \by_D \\
\bf_F \\
\end{bmatrix}.
\end{equation}
Note that the sign of the adjoint state in this formulation is different than in
\eqref{eq:discrete_opt_system}. This results in a symmetric system matrix.

In fact, the system \eqref{eq:discrete_opt_sys} is a saddle point or KKT matrix \cite{BenGolLie05,book::andy}. For complex networks with many connections it is infeasible to work with direct solvers \cite{book::duff} due to the higher complexity and fill-in issues. While one could employ non-standard conjugate gradient methods \cite{HS10,RS09,sz2007}, we here employ \minres\ \cite{minres} as a tailored scheme for symmetric and indefinite matrices or \gmres\  \cite{gmres} for the case of a nonsymmetric preconditioned systems. Of course, the performance of this scheme will rely on the distribution of the eigenvalues and we need to improve the performance by introducing a suitably chosen preconditioner $\mathbf{P}$. Its design is discussed in the next part.

\subsection{Preconditioning}
\label{sec:preconditioner}

It is well known \cite{preconMGW} that an ideal preconditioner for saddle
point systems is given by
\[
\mathbf{P}_{\mathrm{ideal}}=
\begin{bmatrix}
\Mb_{FF} & \Mb_{FD} & \mathbf{0} \\
\Mb_{DF} & \Mb_{DD}+\beta\Ib & \mathbf{0}		\\
\mathbf{0} & \mathbf{0} & \mathbf{S} \\
\end{bmatrix}
\]
with the Schur-complement defined as
\[
\mathbf{S}=\left[\Kb_{FF} \quad \Kb_{FD}\right]
\begin{bmatrix}
\Mb_{FF} & \Mb_{FD} \\
\Mb_{DF} & \Mb_{DD}+\beta\Ib
\end{bmatrix}^{-1}
\begin{bmatrix}
\Kb_{FF}^{\top}  \\
\Kb_{FD}^{\top} 
\end{bmatrix}.
\]
While this preconditioner is attractive in producing an optimally clustered spectrum of the preconditioned matrix, it is in general very expensive to apply as the matrix $\mathbf{S}$ is typically dense. A more practical choice is given when we consider a block-diagonal approximation
\[
\mathbf{P}_{\mathrm{ideal}}\approx
\begin{bmatrix}
\Mb_{1} &\mathbf{0}& \mathbf{0} \\
\mathbf{0} & \Mb_{2} & \mathbf{0}\\
\mathbf{0} & \mathbf{0} & \mathbf{S}_1 \\
\end{bmatrix},
\]
where 
\[
\begin{bmatrix}
\Mb_{FF} & \Mb_{FD} \\
\Mb_{DF} & \Mb_{DD}+\beta\Ib\\
\end{bmatrix}\approx
\begin{bmatrix}
\Mb_{1} &\mathbf{0} \\
\mathbf{0} & \Mb_{2} 
\end{bmatrix}
\textrm{ and } 
\mathbf{S}_1\approx \mathbf{S}.
\]
To make these approximation more precise, we first focus on approximating the mass matrix block and note that again an ideal preconditioner according to \cite{preconMGW} is 
\[
\begin{bmatrix}
	\Mb_{FF} & \Mb_{FD} \\
	\Mb_{DF} & \Mb_{DD}+\beta\Ib\\
\end{bmatrix}\approx
\begin{bmatrix}
	\Mb_{FF} &\mathbf{0} \\
	\mathbf{0} & \Mb_{DD}+\beta\Ib-\Mb_{DF}\Mb_{FF}^{-1}\Mb_{FD}
\end{bmatrix}.
\]
Now, we use this approximation and the notation
\[
\mathbf{S}_M=\Mb_{DD}+\beta\Ib-\Mb_{DF}\Mb_{FF}^{-1}\Mb_{FD}
\]
to obtain the following approximation of the Schur-complement of the overall KKT system
\begin{align*}
	\mathbf{S}&\approx \left[\Kb_{FF} \quad \Kb_{FD}\right]
	\begin{bmatrix}
	\Mb_{FF} &\mathbf{0} \\
	\mathbf{0} & S_M
	\end{bmatrix}^{-1}
	\begin{bmatrix}
	\Kb_{FF}^{\top}  \\
	\Kb_{FD}^{\top} 
	\end{bmatrix}\\
	&=\Kb_{FF} \Mb_{FF}^{-1}\Kb_{FF}^{\top}+\Kb_{FD}\mathbf{S}_M^{-1}\Kb_{FD}^{\top}.
\end{align*}
Still, the approximation \[\Kb_{FF} \Mb_{FF}^{-1}\Kb_{FF}^\top+\Kb_{FD}\mathbf{S}_M^{-1}\Kb_{DF}\]  is not very practical to work with since we would need to invert a sum of matrix products. We want to derive an efficient approximation to this sum. First,  we look at the following approximations $\Mb_{FF}\approx \mathbf{D}_M:=\textrm{diag}(\Mb_{FF}).$ The usefulness of this approximation for practical purposes is illustrated in Figure \ref{fig:prec_mass_diagonal:eigs}, where we show that using the approximation $\mathbf{D}_M$ causes almost no change in the eigenvalues of the preconditioned mass matrix
\[
\begin{bmatrix}
\Mb_{FF} &\mathbf{0} \\
\mathbf{0} & \mathbf{S}_M
\end{bmatrix}^{-1}\begin{bmatrix}
\Mb_{FF} & \Mb_{FD} \\
\Mb_{DF} & \Mb_{DD}+\beta\Ib
\end{bmatrix}.
\]
We also use $\mathbf{S}_M\approx \mathbf{D}_{\mathbf{S}_M}:=\Mb_{DD}+\beta\Ib-\Mb_{DF}\mathbf{D}_M^{-1}\Mb_{FD}$ and then get
\begin{equation}
\label{eq::preconschur1}
\Kb_{FF} \Mb_{FF}^{-1}\Kb_{FF}^{\top}+\Kb_{FD}\mathbf{S}_M^{-1}\Kb_{FD}^{\top}\approx \Kb_{FF} \mathbf{D}_{M}^{-1}\Kb_{FF}^{\top}+\Kb_{FD}\mathbf{D}_{\mathbf{S}_M}^{-1}\Kb_{FD}^{\top}.
\end{equation} 

\tikzexternaldisable
\begin{figure}
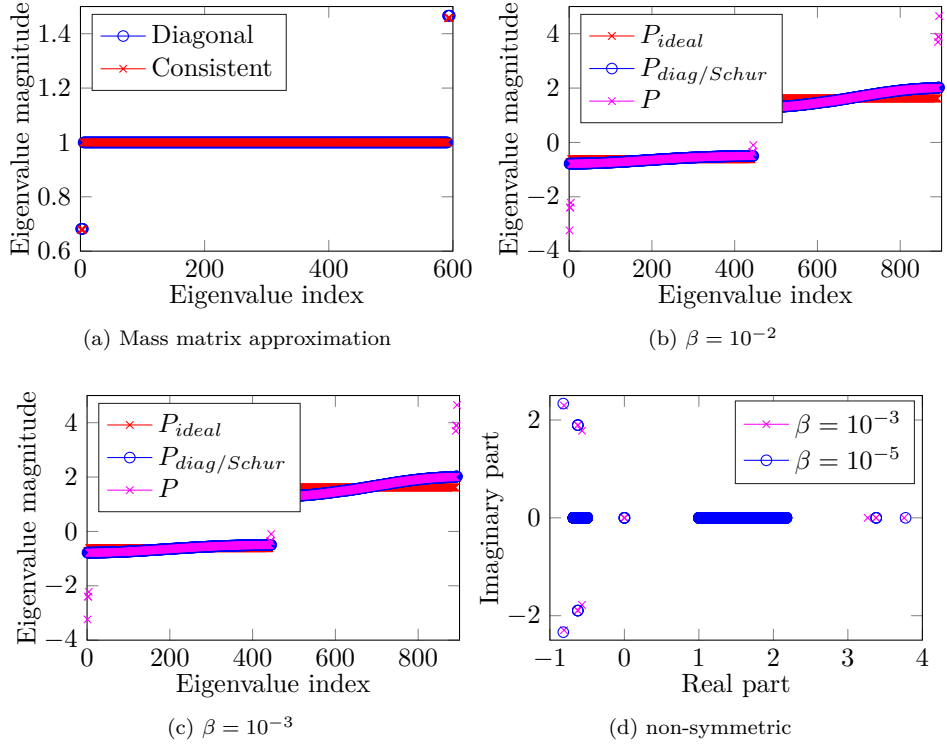

	\begin{center}
		\setlength\figureheight{.25\linewidth} 
		\setlength\figurewidth{.4\linewidth}
		\subfloat[Mass matrix approximation]{
			\input{Mapprox.tikz}\label{fig:prec_mass_diagonal:eigs}}
		\subfloat[$\beta=10^{-2}$]{
			\input{Precon_symm_ex1.tikz}\label{fig:prec_mass_diagonal:Pall1}}
			
		\subfloat[$\beta=10^{-3}$]{
			\input{Precon_symm_ex2.tikz}\label{fig:prec_mass_diagonal:Pall2}}
		\subfloat[non-symmetric]{
			\input{Precon_unsymm_ex1.tikz}\label{fig:prec_mass_diagonal:Pall3}}
			\caption{Eigenvalue plots of the preconditioned matrices. In particular, preconditioned mass matrix (top left), preconditioned saddle point system in symmetric form (top right, bottom left), and preconditioned saddle point system with non-symmetric preconditioner (bottom right)}
		\label{res:prec_mass_diagonal}
	\end{center}
\end{figure}
\tikzexternalenable

The approximation $\Kb_{FF}\mathbf{D}_{M}^{-1}\Kb_{FF}^{\top}+\Kb_{FD}\mathbf{D}_{\mathbf{S}_M}^{-1}\Kb_{FD}^{\top}$ is difficult to work with directly as this involves a sum of terms. We now want to further rework this to get a usable preconditioning scheme
\[
\Kb_{FF} \mathbf{D}_{M}^{-1}\Kb_{FF}^{\top}+\Kb_{FD}\mathbf{D}_{\mathbf{S}_M}^{-1}\Kb_{FD}^{\top}
\approx \left(\Kb_{FF}+\mathbf{N}\right)\mathbf{D}_{M}^{-1}\left(\Kb_{FF}+\mathbf{N}\right)^{\top}.
\]
A matching argument \cite{PW11,PSW11} requires the following
\[
\mathbf{N}\mathbf{D}_{M}^{-1}\mathbf{N}^{\top}=\Kb_{FD}\mathbf{D}_{\mathbf{S}_M}^{-1}\Kb_{FD}^{\top},
\]
which holds true for 
\[
\mathbf{N}=\left(\Kb_{FD}\mathbf{D}_{\mathbf{S}_M}^{-1}\Kb_{FD}^{\top}\right)^{1/2}\mathbf{D}_{M}^{1/2}.
\]
Finally, since we do not want to take the square root of a possibly very large matrix, we obtain the final Schur-complement 
\[
\mathbf{N}=\left(\mathbf{D}_{KDK}\right)^{1/2}\mathbf{D}_{M}^{1/2},
\]
where $\mathbf{D}_{KDK}$ is the lumped version of $\Kb_{FD}\mathbf{D}_{\mathbf{S}_M}^{-1}\Kb_{FD}^{\top}$. While this approach fits well within the preconditioning framework for a symmetric solver like \minres \ \cite{minres} we can alternatively try to avoid the square root and possibly lumping of a matrix. For this lets assume we are willing to employ a non-symmetric solver such as \gmres \  \cite{gmres}. We then again use a matching argument
\[
\Kb_{FF} \Mb_{FF}^{-1}\Kb_{FF}^{\top}+\Kb_{FD}\mathbf{D}_{\mathbf{S}_M}^{-1}\Kb_{FD}^{\top}
\approx \left(\Kb_{FF}+\mathbf{N}_1\right)\Mb_{FF}^{-1}\left(\Kb_{FF}^{\top}+\mathbf{N}_2\right)
\]
with the non-symmetric approach
\[
\mathbf{N}_1\Mb_{FF}^{-1}\mathbf{N}_2=\Kb_{FD}\mathbf{D}_{\mathbf{S}_M}^{-1}\Kb_{FD}^{\top},
\]
which we can achieve by using 
\[
\mathbf{N}_1=\Mb_{FF},\quad \mathbf{N}_2=\Kb_{FD}\mathbf{D}_{\mathbf{S}_M}^{-1}\Kb_{FD}^{\top}.
\]
Note that since we do not need to take square roots here we can work with the matrix $\Mb_{FF}$ and not just its diagonal or lumped version.

Now, we can write down the preconditioner
\[
\Pb=
\begin{bmatrix}
	\mathbf{D}_{M} &\mathbf{0} &\mathbf{0} \\
	\mathbf{0} & S_M&\mathbf{0} \\
	\mathbf{0} &\mathbf{0} & \left(\Kb_{FF}+\mathbf{N}\right)\mathbf{D}_{M}^{-1}\left(\Kb_{FF}+\mathbf{N}\right)^{\top}
\end{bmatrix}
\]
for the symmetric setup and 
\[
\Pb=
\begin{bmatrix}
\mathbf{D}_{M} &\mathbf{0} &\mathbf{0} \\
\mathbf{0} & S_M&\mathbf{0} \\
\mathbf{0} &\mathbf{0} & \left(\Kb_{FF}+\mathbf{N}_1\right)\mathbf{M}_{FF}^{-1}\left(\Kb_{FF}^{\top}+\mathbf{N}_2\right)
\end{bmatrix}
\]
in the unsymmetric case.
For a small example using this in Matlab we see in Figures
\ref{fig:prec_mass_diagonal:Pall1} and \ref{fig:prec_mass_diagonal:Pall2},
where the value of the regularization parameter is varied,  that the
eigenvalues of the preconditioned matrix are nicely clustered for the ideal
preconditioner $\Pb_{\text{ideal}}$, the approximation with Schur-complement
using \eqref{eq::preconschur1} called $\Pb_{\text{diag/Schur}}$, and the matching
based approximation $\Pb$. Note that since we will only work with $\Pb$ the approximation with an expensive Schur-complement $\Pb_{\text{diag/Schur}}$ is never used.
`We also observe that there is almost no change in the eigenvalues even when the parameter $\beta$ changes. Similar robustness is observed when we use the non-symmetric Schur-complement approximation as illustrated in Figure \ref{fig:prec_mass_diagonal:Pall3}.

The efficiency of our preconditioners is discussed in the numerical experiments in the next section.
\section{Numerical experiments}
\label{sec:experiments}
In this section we illustrate how the methodology developed in this paper performs when applied to several challenging datasets. Our implementation is based on MATLAB\textsuperscript{\textregistered}.
For the iterative solvers we rely on the standard implementations of \minres\ and \gmres\ within  MATLAB\textsuperscript{\textregistered}. We run the algorithm until a relative tolerance
of $10^{-8}$ is reached.

\subsection{Preconditioning}
The goal of the preconditioners we designed is to provide fast and robust methods that lead to a method exploiting as much as possible the structure of the saddle point problems presented previously. While for certain smaller and non-complex graphs the use of direct solvers is very efficient, the motivation for constructing preconditioners is the applicability for complex networks and also more challenging differential equations that contain time-derivatives and possibly parameter dependencies. 

The first example we consider is a simple star graph, where we have one internal node
and all remaining nodes are leaf nodes. The solution of a Dirichlet
boundary control problem in such a graph is illustrated in Figure
\ref{fig:star1}. The required iteration numbers of the iterative \gmres-method
using the non-symmetric preconditioner constructed in Section
\ref{sec:preconditioner} are reported in Table~\ref{tab:star_iterations} and
the computational times in Table~\ref{tab:star_time}.
Obviously, our iterative solver is robust,
both with respect to the regularization parameter $\beta$
and the discretization parameter $n_e$.
It can be also observed that our method outperforms an unpreconditioned iterative solver.
As a second example, we consider a finite difference graph of an L-shaped domain,
also illustrated in Figure \ref{fig:L_graph_solution}.
To generate this graph we used the Matlab function ``numgrid('L',N)'', where
$N$ is the number of vertices at the long edges of an L-shaped domain.
In the experiment we used $N=10$ which leads to a graph with $75$ nodes, and
we randomly selected $12$ controllable nodes.
The number of \gmres\ iterations are presented in Table~\ref{tab:fdm}.
As in the previous example, we observe robustness of the iteration numbers with respect to changes in $\beta$ and the number of finite element nodes. We see a slight mesh-dependence for an increasing number of degrees of freedom but the iteration numbers stay rather low. 
\begin{table}[tbh]
	\setlength{\tabcolsep}{4pt}
	\begin{center}
		\subfloat[Star graph -- Iterations\label{tab:star_iterations}]{
		\begin{tabular}{lccccccc} 
		\toprule
		\ $N_{\text{DOF}}$  & 86 & 182 & 374 & 758 & 1526 & 3062 & 6134 \\
		$\beta$  & & & & & & & \\ 
		\midrule 
		$10^{-2}$ & 11 (11) & 25 (35) & 29 (87) & 27 (244) & 36 (727) & 31 (2473) & 31 (5636)\\
		$10^{-3}$ & 16 (16) & 25 (35) & 29 (89) & 34 (243) & 31 (740) & 31 (2474) & 31 (5636)\\
		$10^{-4}$ & 16 (16) & 22 (36) & 24 (90) & 34 (243) & 34 (727) & 31 (2472) & 31 (5548)\\
		$10^{-5}$ & 16 (16) & 22 (36) & 24 (90) & 34 (243) & 33 (729) & 31 (2472) & 31 (5633)\\
		\bottomrule
		\end{tabular}
		}
		
		\subfloat[Star graph -- computational time in seconds\label{tab:star_time}]{
		\begin{tabular}{lcccc} 
		\toprule
		\ $N_{\text{DOF}}$ & 758 & 1526 & 3062 & 6134 \\
		$\beta$  & & &\\ 
		\midrule 
		$10^{-2}$ &  0.01 (0.06) & 0.03 (0.75) & 0.04 (17.00) & 0.09 (353.95) \\
    		$10^{-3}$ &  0.01 (0.04) & 0.02 (0.71) & 0.04 (16.60) & 0.10 (354.36) \\
		$10^{-4}$ &  0.01 (0.05) & 0.02 (0.68) & 0.04 (16.97) & 0.09 (353.03) \\
		$10^{-5}$ &  0.01 (0.04) & 0.02 (0.65) & 0.04 (16.88) & 0.09 (346.52) \\
		\bottomrule
		\end{tabular}
		}
		
		\subfloat[FDM graph -- Iterations\label{tab:fdm}]{
		\begin{tabular}{lccccc} 
		\toprule
		\  $N_{\text{DOF}}$  & 398 & 918 & 1958 & 4038 & 8198 \\
		$\beta$ &  & & & & \\ 
		\midrule 
		$10^{-2}$ & 44 (235) &   90 (563) &  103 (1309) &   94 (3233) &   92 (--) \\
		$10^{-3}$ & 47 (238) &   89 (565) &  106 (1313) &   87 (3241) &   86 (--) \\
		$10^{-4}$ & 46 (246) &   86 (570) &  106 (1309) &   88 (3233) &   84 (--) \\
		$10^{-5}$ & 46 (236) &   92 (566) &  100 (1309) &   94 (3248) &   85 (--) \\
		\bottomrule
		\end{tabular}
		}
\caption{Dependence of the iteration numbers of \gmres\ on the regularization parameter $\beta$
and discretization parameter $N_{\text{DOF}} = n + m\,(n_e-1)$. The values in parentheses are the results
for the unpreconditioned method.}
\end{center}
\end{table}

\begin{figure}[tbh]
	\begin{center}
		\includegraphics[width=.9\textwidth]{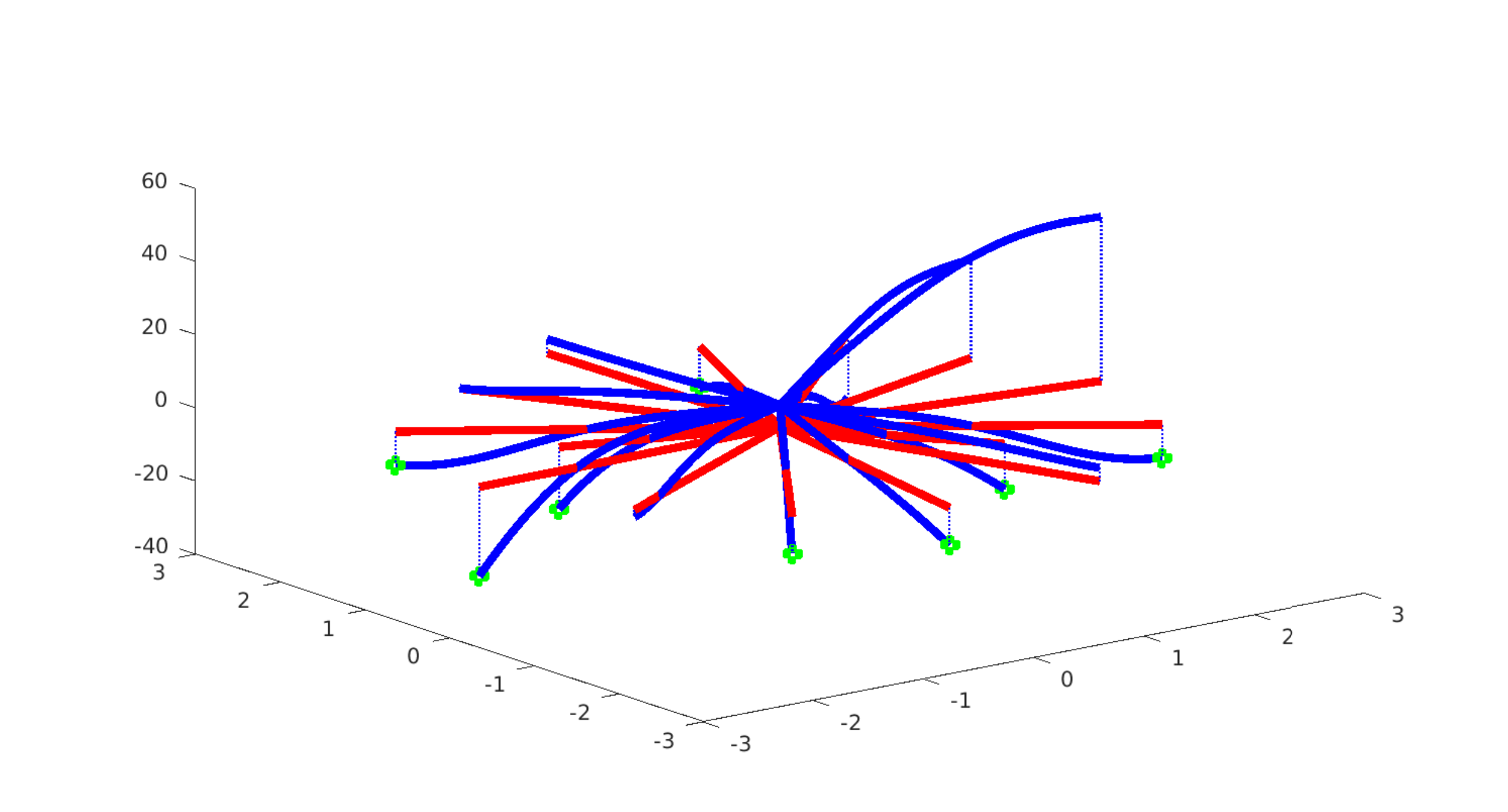}
		\caption{Optimal solution of the Dirichlet control problem in a star-shaped network. The red lines represent the edges of the
graph, the blue lines represent the optimal state and the green stars are the
control nodes.}
		\label{fig:star1}
	\end{center}
\end{figure}
\begin{figure}[tbh]
\begin{center}
\includegraphics[width=.9\textwidth]{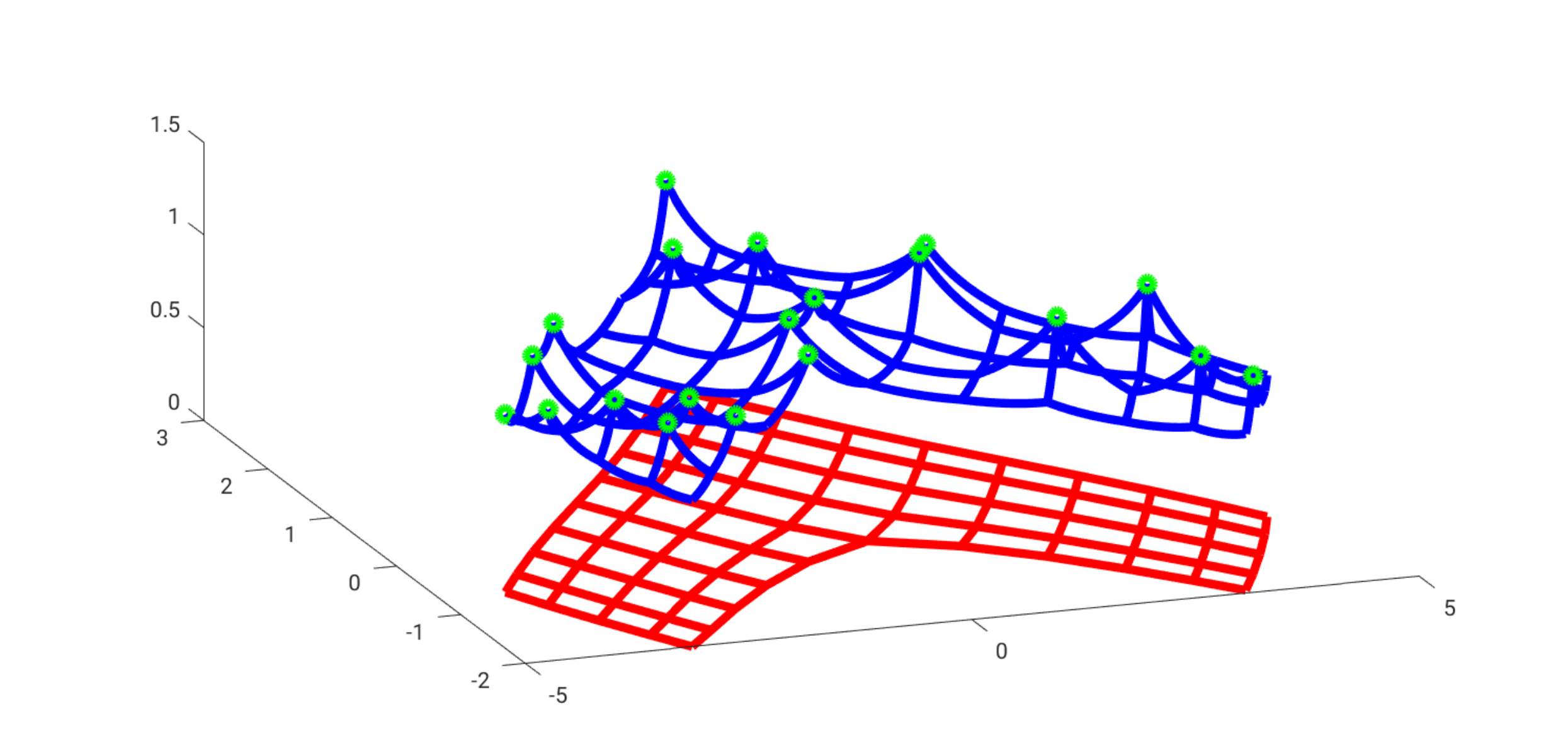}
\end{center}
\caption{Solution of the optimal Dirichlet control problem on an L-shaped finite
difference graph.}
\label{fig:L_graph_solution}
\end{figure}

\subsection{Discretization error estimates}
\label{sec:experiment_discretization}

In this experiment we want to check whether the convergence behavior predicted by Theorem~\ref{thm:estimate_optimal_control} is also observed in experiments. To this end, we use again the FDM graph of an L-shaped domain, as considered already in the previous section.
The input data for the optimal control problem are
$\beta=0.1$, $\bar y\equiv 1$, $f \equiv 1.5$ and $\cc\equiv 2$
and the exact solution is illustrated in Figure \ref{fig:L_graph_solution}.
We repeated the computation of the optimal Dirichlet control problem for
the parameters $n_e=2^k$, $k=1,2,\ldots,13$. The discretization error for the
control and the state in the $L^2(\Gamma)$- and $H^1(\Gamma)$-norm is obtained by comparison of the solution with the solution on the
finest grid with $k=15$. In Table \ref{tab:discretization_error} the absolute
values of the error and the experimental convergence rates are summarized.
Obviously, the discrete controls and states converge linearly towards the exact solution
which confirms that the convergence rates proved in
Theorem~\ref{thm:estimate_optimal_control} are sharp.
\begin{table}[bth]
\begin{center}
\begin{tabular}{lllll}
\toprule
$N_{\text{DOF}}$ & $h$ & $\abs{u-u_h}_2$ & $\norm{y-y_h}_{L^2(\Gamma)}$ &
$\norm{y-y_h}_{H^1(\Gamma)}$ \\ \midrule
8395   & $2^{-6}$ &	4.60e-03	(0.94) & 1.45e-02 (1.00) &	2.74e-02 (1.00) \\
16715  & $2^{-7}$ &	2.34e-03	(0.97) & 7.24e-03 (1.00) &	1.37e-02 (1.00) \\
33355  & $2^{-8}$ &	1.17e-03	(0.99) & 3.61e-03 (1.00) &	6.84e-03 (1.00) \\
66635  & $2^{-9}$ &	5.87e-04	(1.00) & 1.79e-03 (1.01) &	3.40e-03 (1.00) \\
133195 & $2^{-10}$ & 2.89e-04	(1.01) & 8.81e-04 (1.02) &	1.68e-03 (1.01) \\
266315 & $2^{-11}$ & 1.40e-04	(1.04) & 4.26e-04 (1.04) &	8.25e-04 (1.02) \\
532555 & $2^{-12}$ & 6.57e-05	(1.09) & 1.99e-04 (1.09) &	3.95e-04 (1.06) \\
\bottomrule
\end{tabular}
\caption{Absolute error of the discrete controls in $\abs{\cdot}_2$ and states in
$\norm{\cdot}_{L^2(\Gamma)}$ and $\abs{\cdot}_{H^1(\Gamma)}$. The numbers in
parentheses are experimental convergence rates. Here, $N_{\text{DOF}}$ is the number of
degrees of freedom for the state variable, i.\,e., the number of nodes in the
extended graph.}
\label{tab:discretization_error}
\end{center}
\end{table}

\subsection{Other networks}
\label{sec:complex_networks}
We now illustrate on two more examples that the technique presented by us does
apply to more general complex networks. The first example is the road network
of Minnesota\footnote{\url{https://www.cise.ufl.edu/research/sparse/matrices/Gleich/minnesota.html}}
illustrated in Figure~\ref{fig:minnesota}.
Also for this problem, the preconditioner we studied is robust with respect to
$\beta$ and $h$, see Table~\ref{tab:minnesota}.
The second complex network we use is the Facebook Ego Network Dataset\footnote{\url{https://blogs.mathworks.com/loren/2016/02/03/visualizing-facebook-networks-with-matlab/}} shown in Figure~\ref{fig:facebook}.
\begin{table}
	\begin{center}
		\begin{tabular}{lccc} 
			\toprule
			\quad $N_{\text{DOF}}$ &210000  &    421328  &    843980\\
			$\beta$  &  &  & \\
			\midrule 
			$10^{-1}$ &45 &   45 &   61\\
			$10^{-2}$ &39 &   31 &   42\\
			$10^{-3}$ &39 &   29 &   41\\
			$10^{-4}$ &39 &   29 &   41\\
			\bottomrule
		\end{tabular}
		\caption{Iteration numbers of the \gmres-method for the solution of
		an optimal Dirichlet control problem on the Minnesota graph.}
		\label{tab:minnesota}
	\end{center}
\end{table}
\begin{figure}[htb]
	\begin{center}
	\subfloat[Road network of Minnesota\label{fig:minnesota}]{\includegraphics[width=.49\textwidth]{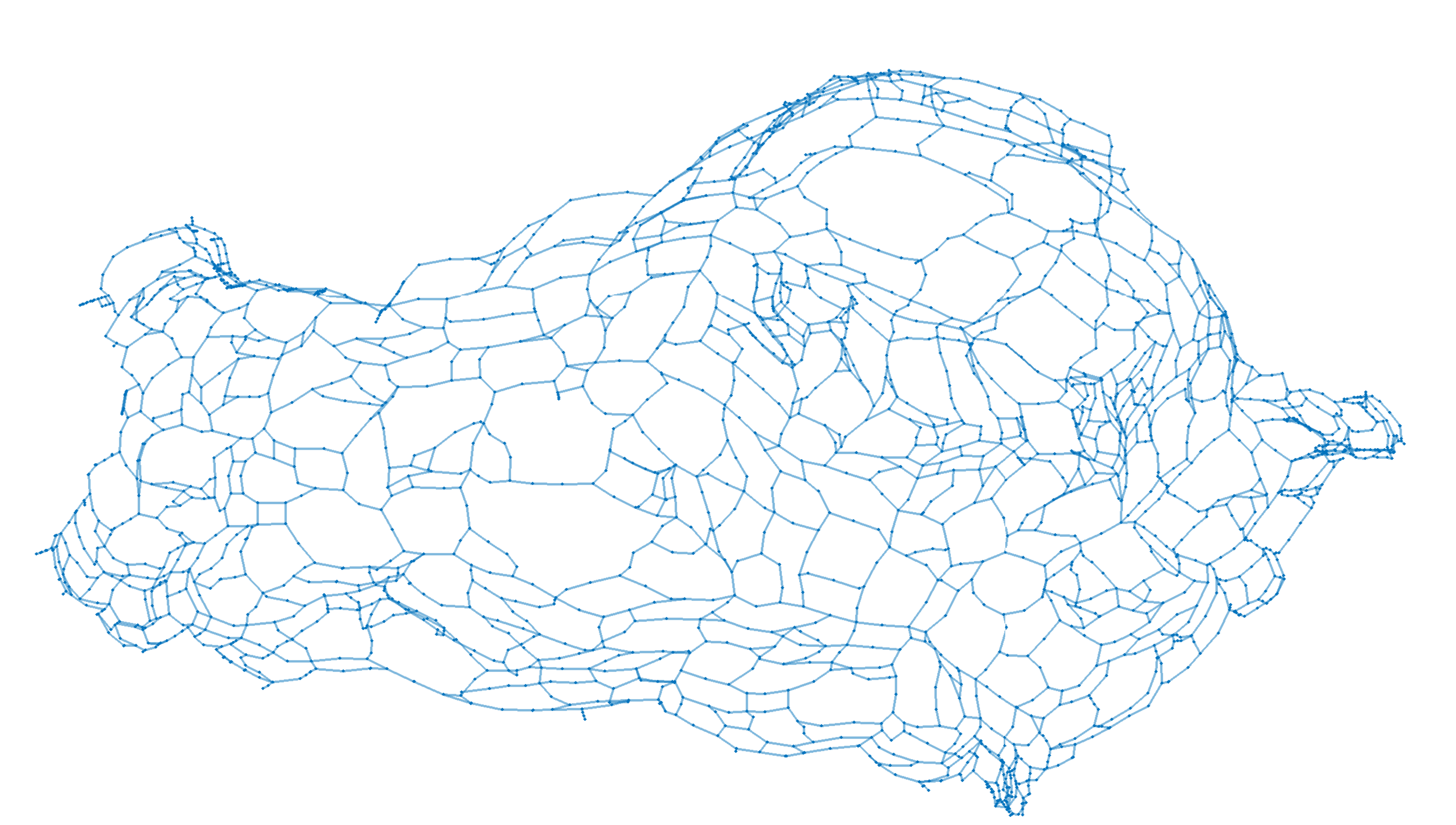}}
	\subfloat[Ego network of Facebook\label{fig:facebook}]{\includegraphics[width=.49\textwidth]{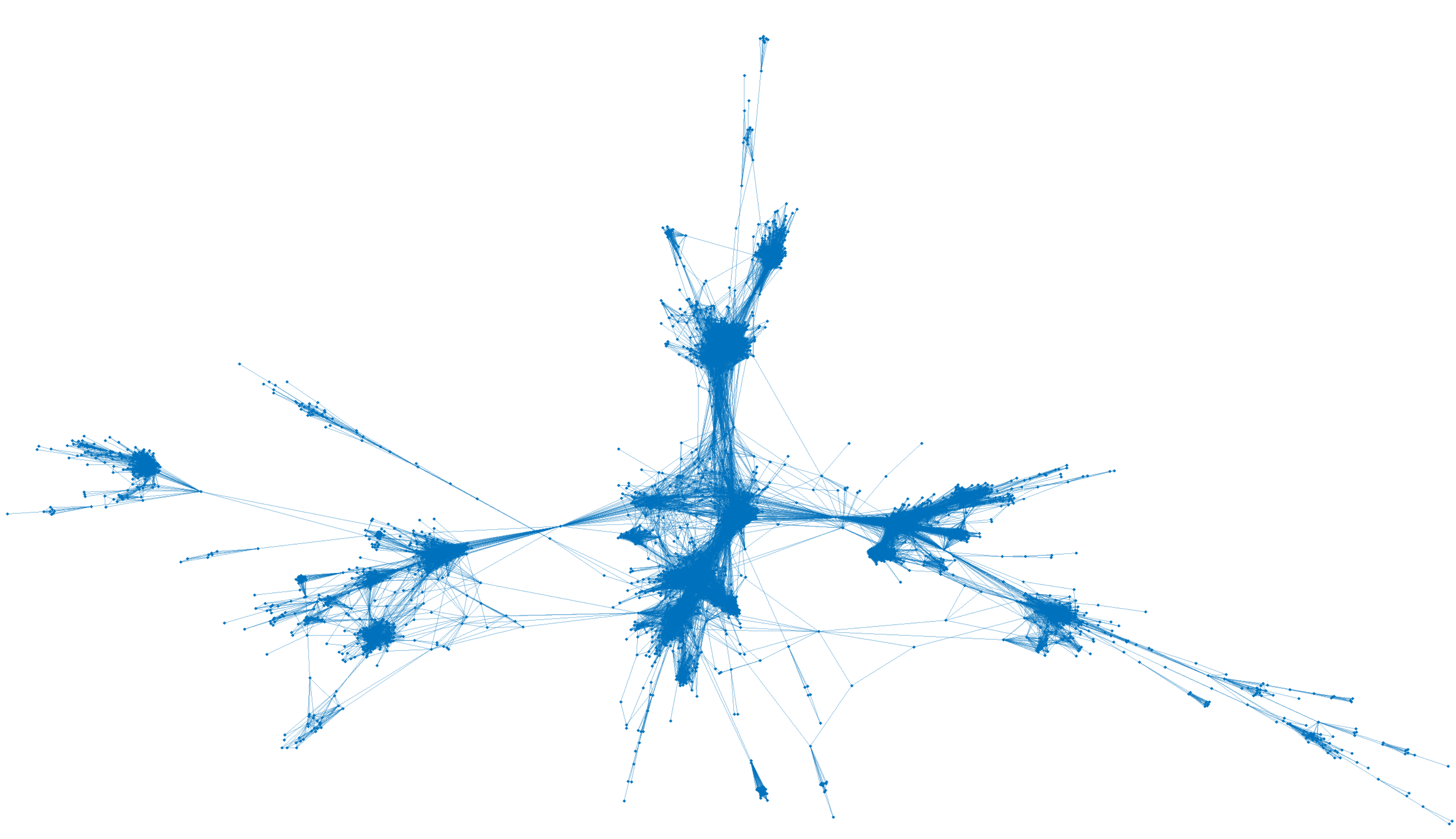}}
	\end{center}
	\caption{Example networks investigated in Section \ref{sec:complex_networks}.}
\end{figure}
We here have $5{,}228{,}870$ degrees of freedom and the solver is able to obtain the solution after $38$ iterations for $\beta=10^{-3}$ and $37$ iterations for $\beta=10^{-4}$.
\section{Conclusion}
In this paper we have discussed the problem of a PDE-con\-strained optimization problem on a complex network. The steady PDE-operator and the objective function were discretized using finite elements. A rigorous error analysis showed first order convergence that we later studied in the numerical experiments section of the paper. We further discussed the matrix structure following \cite{ariolifinite} and proposed a preconditioning strategy for the saddle point system representing the first order conditions. Numerical experiments illustrated that we do indeed observe the proven discretization error and that the developed preconditioned performed robustly with respect to changes of all the relevant parameters,
\bibliographystyle{siam}
\bibliography{../../refs,../../data,../../publications}

\end{document}